\documentclass[a4paper,12pt]{article}

\usepackage{color}
\usepackage{natbib}
\usepackage{amsmath}
\usepackage{hyperref}
\usepackage{indentfirst}
\usepackage{verbatim}
\usepackage{amssymb}
\usepackage{float}
\usepackage{caption}
\usepackage[english]{babel}
\DeclareRobustCommand\iff{\;\Longleftrightarrow\;}
\usepackage{rotating}
\selectlanguage{english}

\newcommand{\x}{\boldsymbol{x}}
\newcommand{\s}{\boldsymbol{s}}

\newcommand{\M}{\boldsymbol{M}}

\newcommand{\HH}{\boldsymbol{H}}
\newcommand{\PPP}{\boldsymbol{P}}
\newcommand{\I}{\boldsymbol{I}}
\newcommand{\J}{\boldsymbol{J}}

\newcommand{\ZZZ}{\mathbb{Z}_{s_1}\times\cdots\times \mathbb{Z}_{s_r}}
\newcommand{\ZZZS}{(\mathbb{Z}_{s_1} \rtimes \mathbb{Z}^*_{s_1}) 
\times \cdots \times
(\mathbb{Z}_{s_r} \rtimes \mathbb{Z}^*_{s_r})}

\newcommand{\ZZZSZ}{(\{0\} \rtimes \mathbb{Z}^*_{s_1}) 
\times \cdots \times
(\{0\} \rtimes \mathbb{Z}^*_{s_r})}

\newcommand{\G}{\sum_{g \in G}g}
\newcommand{\HHHH}{\sum_{h \in H}h}
\newcommand{\aaa}{\boldsymbol{a}}
\newcommand{\bb}{\boldsymbol{b}}
\newcommand{\B}{\boldsymbol{B}}

\newcommand{\C}{\boldsymbol{C}}

\newcommand{\FF}{\mathbb{F}}

\newcommand{\y}{\boldsymbol{y}}

\newcommand{\PPsia}{\sum_{g \in G}a_gg}
\newcommand{\PPsib}{\sum_{g \in G}b_gg}

\newcommand{\A}{\boldsymbol{A}}

\usepackage{float}
\usepackage{amsfonts}
\usepackage{fancyhdr}
\usepackage[english]{babel}
\usepackage{longtable}
\usepackage{float}
\usepackage{caption}
\usepackage[utf8]{inputenc}
\usepackage[english]{babel}
 
\usepackage{amsthm}
\usepackage{mathtools}

\theoremstyle{definition}

\newtheorem{theorem}{Theorem}
\newtheorem{corollary}{Corollary}
\newtheorem{lemma}{Lemma}

\theoremstyle{definition}
\newtheorem{definition}{Definition}
\newtheorem{example}{Example}

\usepackage{enumerate} 
\pagenumbering{arabic}

\parskip 0.20pc          
\parindent 1em                     

\setlength{\textwidth}{6.2in}

\makeatletter
\newsavebox\myboxA
\newsavebox\myboxB
\newlength\mylenA

\newcommand*\xoverline[2][0.75]{%
    \sbox{\myboxA}{$\m@th#2$}%
    \setbox\myboxB\null
    \ht\myboxB=\ht\myboxA%
    \dp\myboxB=\dp\myboxA%
    \wd\myboxB=#1\wd\myboxA
    \sbox\myboxB{$\m@th\overline{\copy\myboxB}$}
    \setlength\mylenA{\the\wd\myboxA}
    \addtolength\mylenA{-\the\wd\myboxB}%
    \ifdim\wd\myboxB<\wd\myboxA%
       \rlap{\hskip 0.5\mylenA\usebox\myboxB}{\usebox\myboxA}%
    \else
        \hskip -0.5\mylenA\rlap{\usebox\myboxA}{\hskip 0.5\mylenA\usebox\myboxB}%
    \fi}
\makeatother

\begin{document}

\bibliographystyle{plain}

\title{
Legendre $G$-array pairs and the theoretical unification of several $G$-array families
}

\author{
K. T. Arasu${}^{\rm a}$,
D. A.~Bulutoglu${}^{\rm b}$ and J. R. Hollon${}^{\rm c}$\\
${}^{\rm a}$Riverside Research, 2640 Hibiscus Way,\\ Beavercreek, OH 45431, USA\\
${}^{\rm b}$Air Force Institute of Technology, 
WPAFB, OH 45433, USA  \\
${}^{\rm c}$Applied Optimization Inc.,
3040 Presidential Dr. Suite 100\\
Fairborn, OH 45324,  USA}

\maketitle

\def\l{\lambda}
\def\cA{{\cal A}}
\def\cB{{\cal B}}
\def\cC{{\cal C}}
\def\cE{{\cal E}}
\def\cG{{\cal G}}
\def\cD{{\cal D}}
\def\cR{{\cal R}}
\def\cS{{\cal S}}
\def\cT{{\cal T}}
\def\cU{{\cal U}}
\def\cW{{\cal W}}
\def\cV{{\cal V}}
\def\cX{{\cal X}}

\def\g{\gamma}
\def\p{\pi}
\def\n{\eta}
\def\vg{\boldsymbol\g}
\def\vp{\boldsymbol\p}
\def\vn{\boldsymbol\n}
\begin{abstract}
We investigate how  Legendre $G$-array pairs are related to several different  perfect binary $G$-array families. In particular we study the relations between Legendre $G$-array pairs, Sidelnikov-Lempel-Cohn-Eastman 
$\mathbb{Z}_{q-1}$-arrays, Yamada-Pott $G$-array pairs,  Ding-Helleseth-Martinsen $\mathbb{Z}_{2}\times \mathbb{Z}_p^{m}$-arrays, Yamada $\mathbb{Z}_{(q-1)/2}$-arrays,  Szekeres $\mathbb{Z}^m_{p}$-array  pairs, Paley $\mathbb{Z}^m_{p}$-array  pairs, and Baumert $\mathbb{Z}^{m_1}_{p_1}\times \mathbb{Z}^{m_2}_{p_2}$-array  pairs. Our work also solves one of the two open problems posed in Ding~[J. Combin. Des. 16 (2008), 164-171]. Moreover, we provide several computer search based existence and non-existence results  regarding Legendre $\mathbb{Z}_n$-array pairs. Finally,  by using cyclotomic cosets, we provide a previously unknown Legendre $\mathbb{Z}_{57}$-array pair.
\end{abstract}
Keywords: 
Cyclotomy, Group ring, Hadamard matrix, Skew-symmetric,  Supplementary difference set

\section{Introduction}
\label{introduction}
In this section, we first survey several known infinite binary 
$G$-array families and $G$-array pairs for a finite abelian group $G$.
In Section~\ref{sec:results}, we show how these $G$-array 
families and $G$-array pairs are related to each other.
\subsection{$G$-arrays and their correlations}
\label{sec:introcorrelations}
Let $n$ be a positive integer and $G$ be an abelian  group of order $n$. Then  $\aaa=(a_{g})$ with $g \in  G$ and $a_g \in \mathbb{C}$ is called a {\em $G$-array}. 
 The {\em cross-correlation function}  of the two 
 $G$-arrays $(a_{g})$ and $(b_{g})$ is defined by:  
\begin{equation*}
	C_{\aaa,\bb}(t)=\sum_{g \in G} a_{gt} \bar{b}_{g},
\end{equation*}
where $t \in G$ and $\bar{b}_g$ is the complex conjugate of $b_g$. If $\aaa = \bb$, then $C_{\aaa,\aaa}(t):=C_{\aaa}(t)$ is called the {\em autocorrelation function}  of $\aaa$. 

 We call a $G$-array $\aaa$ a $\{0,1\}$ ($\{-1,1\}$) $G$-array if $a_g \in \{0,1\}$ ($\{-1,1\}$) $ \forall g \in G$. 
 In this paper, we consider only $\{-1,1\}$ or $\{0,1\}$ $G$-arrays. 
The linear transformation $a_g \rightarrow 2a_g - 1$ is a bijection  that maps a $\{0, 1\}$ $G$-array to a $\{-1, 1\}$ $G$-array.  Throughout, we
switch repeatedly between a $\{0, 1\}$ $G$-array and its corresponding $\{-1, 1\}$ $G$-array. The choice between $\{0, 1\}$ and  $\{-1, 1\}$ coefficients in any particular context is
dictated by applications or ease of computation. If we refer to a $\{0,1\}$ $G$-array as a $\{-1, 1\}$ $G$-array we mean the 
$\{-1, 1\}$ $G$-array obtained from the $\{0,1\}$ $G$-array by applying the bijection  $a_g \rightarrow 2a_g - 1$.

By the structure theorem, every finite abelian group $G$ is isomorphic to
$\ZZZ$ 
for some $r \in \mathbb{Z}^{\geq 1}$.
 Let  $H_i=\langle\omega_i\rangle$ and $|H_i|=s_i$ for $s_i \in \mathbb{Z}^{\geq 2}$. 
Then, the map $\Theta:\ZZZ\rightarrow H_1\times\cdots\times H_r$ such that
$ \Theta(\alpha_1,\ldots,\alpha_r)=\omega_1^{\alpha_1}\ldots\omega_r^{\alpha_r}$ is an isomorphism between
$\ZZZ$ and $H_1\times\cdots\times H_r$ for each set of fixed  $\{\omega_i\}_{i=1}^r$. Throughout the paper we fix the notation
 $\Theta$ for this isomorphism. 

For a $G$-array $(a_g)$ and an isomorphism 
$\Phi:G\rightarrow\Phi(G)$, define the $\Phi(G)$-array 
$\Phi(a_g)$ via
 $$\Phi((a_g))=(a'_{\Phi(g)}),\text{ where }a'_{\Phi(g)}=a_g.$$ 
 Clearly, both the autocorrelation and the cross-correlation functions are preserved under the map 
 $g \rightarrow \Phi(g)$ for any isomorphism $\Phi$, i.e. $C_{a,b}(t)=C_{\Phi(a),\Phi(b)}(\Phi(t))$ for any two $G$-arrays $(a_g)$ and
  $(b_g)$ where $g,t \in G$.   
 Also, whenever we are using an isomorphic copy of $G$ that has the form $H_1\times\cdots\times H_r$, we  say that 
$G$ is {\em written multiplicatively}, and if  $G$ has the form $\ZZZ$ we say that $G$ is {\em written additively}.
 Unless otherwise specified, for a multiplicatively (additively) written group
 we use $1$ ($0$) as the identity element. We also use $e$ as the identity element of a group $G$.  

Let $n=|G|$. Let  $\aaa=(a_g)$ be a $\{-1,1\}$ or $\{0,1\}$ $G$-array.
 Then the set $ D = \{ g \ |\ g \in G \text{ and } a_g=1 \} $ is called the {\em set of $1$ indices} of $\aaa$.
 Let $d_D(t)=|(Dt) \cap D| $, where $Dt$ is the set of elements of $D$ multiplied by $t$. Then  $d_D(t)$ is called the {\em difference function} of $D \subseteq G$, and for a  $\{0,1\}$ $G$-array $\aaa$ we have
 $$C_{\aaa}(t)=d_D(t).$$ Hence, the autocorrelation function measures how much a $\{0,1\}$ $G$-array differs from its translates.
 When $\aaa=(a_g)$ is a $\{-1,1\}$ $G$-array we get
\begin{equation} \label{eqn:EqnACF}
	C_{\aaa}(t) = n-4(k - d_D(t)),
\end{equation}
where $k=|D|$, see~\cite{PottBook}. 
 By equation (\ref{eqn:EqnACF}) if $\aaa=(a_g)$ is a $\{-1,1\}$ $G$-array,  then $$C_{\aaa}(t) \equiv n \, \: (\text{mod} \, 4).$$
  A $\{-1,1\}$ $G$-array $\aaa$  is called {\em perfect} if 
  for $t \neq e$
\begin{equation*} \label{perfect}
	C_{\aaa}(t)=\begin{cases}
		\, 0 \quad $ if $n\equiv0$ (mod $4$), $\\
		\, 1 \quad $ if $n\equiv1$ (mod $4$), $\\
		\pm 2 \,  \, \, \, \hspace{.02cm}$if $n\equiv2$ (mod $4$), $\\
		-1\, \, \, \, \hspace{.02cm}$otherwise. $
	\end{cases}
\end{equation*}
 A $\{-1,1\}$ $G$-array $\aaa=(a_g)$ is called {\em balanced} if 
  \begin{equation*} \label{balanced}
	\sum_{g \in G }a_g = \begin{cases}
		0 \quad \, \, \, \, \,    $ if $n\equiv0$ (mod $2$), $\\
		\pm1 \quad $ otherwise, $
	\end{cases}
\end{equation*}
and 
{\em almost balanced} if 
  \begin{equation*} \label{almostbalanced}
	\sum_{g \in G }a_g = \begin{cases}
		 \pm2\quad     $ if $n\equiv0$ (mod $2$), $\\
      \pm3 \quad $ otherwise. $
	\end{cases}
\end{equation*}
 Then, based on equation~(\ref{eqn:EqnACF}), 
   a $\{0,1\}$ $G$-array $\aaa$ with $\sum_{g \in G}a_g=k$  is defined to be {\em perfect} if for $t \neq e$
\begin{equation} \label{perfect01}
	C_{\aaa}(t) =d_D(t)=  \begin{cases}
		 k-\frac{n}{4} \quad \, \, $ if $n\equiv0$ (mod $4$), $\\
		 k-\frac{n-1}{4} \,\, $ if $n\equiv1$ (mod $4$), $\\
		k-\frac{n\pm 2}{4} \,  \, $ if $n\equiv2$ (mod $4$), $\\
		k-\frac{n+1}{4}\, \, \, \, $otherwise, $
	\end{cases}
\end{equation}
 and a $\{0,1\}$ $G$-array $\aaa=(a_g)$ is defined to be {\em  balanced} if 
  \begin{equation} \label{balanced01}
	\sum_{g \in G }a_g = \begin{cases}
		\frac{n}{2} \,\, \, \, \, \, \, \,
		\quad    \, $if $n\equiv0$ (mod $2$), $\\
		\frac{n \pm1}{2} \,\,  \quad \, $otherwise, $
	\end{cases}
\end{equation}
and {\em almost balanced} if
\begin{equation} \label{almostbalanced01}
	\sum_{g \in G }a_g = \begin{cases} 
		\frac{n \pm2}{2}\quad    \, $if $n\equiv0$ (mod $2$), $\\
		  \frac{n\pm3}{2} \quad \, $otherwise. $
	\end{cases}
\end{equation}

A $G$-array $\aaa$ is said to have {\em good matched autocorrelation properties} if $$\max_{t \in G\setminus\{e\}} |C_{\aaa}(t)|,$$
and $$\sum_{t \in G} |C_{\aaa}(t)|^2$$ are both small, where  $\max_{t \in G\setminus\{e\}} |C_{\aaa}(t)|$ is called the {\em peak correlation} and $\sum_{t \in G} |C_{\aaa}(t)|^2$ is called the {\em correlation energy}.

Let $G$ be a group of order $v$  
and $D$ be a subset of $G$ with $k$ elements. For any $\alpha \neq e$ and $\alpha \in G$ 
 if the equation 
 \begin{equation}\label{eqn:diff}
 d(d')^{-1} = \alpha
\end{equation}
has exactly $\lambda$ solution pairs $(d,d')$ with both $d$ and $d'$ in $D$, then the set $D$ is called a {\em difference 
set} in $G$ with parameters $(v,k,\lambda)$ denoted by
DS$(v,k,\lambda)$. If equation~(\ref{eqn:diff}) has $\lambda$ solutions for $t$ of the non-identity elements of $G$ and $\lambda+1$ solutions for every other non-identity element, then $D$ is called an  
{\em almost difference set} in $G$ with parameters $(v,k,\lambda,t)$  denoted by ADS$(v,k,\lambda,t)$.
If $G$ is an abelian (cyclic) group and $D$ is a difference set, then $D$ is called an {\em abelian (cyclic) 
difference set} in $G$. If $G$ is an abelian (cyclic) group and $D$ is an almost difference set, then $D$ is called an {\em abelian (cyclic) 
almost difference set}. 

Clearly, $D$ is a(n) (almost) difference set in $G$ if and only if  $\Phi(D)$ is a(n) (almost) difference set in $\Phi(G)$ for any isomorphism $\Phi:G\rightarrow \Phi(G)$.  For a survey of almost difference sets,  see~\cite{ArasuDing}.

A $\{-1,1\}$ $\ZZZ$-array with $k$ entries equal to $1$ and all nontrivial autocorrelation coefficients equal to $\theta=n-4(k-\lambda)$ is equivalent to an abelian DS$(n,k,\lambda)$, see Lemma 1.3 in~\cite{Jungnickel}.

Supplementary difference sets generalize the concept 
of difference sets~\cite{supplementary}.
\begin{definition}\label{Def:SDS}
Let $G$ be a group of order $v$. A collection $D_1,D_2,\ldots, D_{f}$ of $f$ subsets of $G$ with $|D_i|=k_i$
 is called a {\em supplementary difference set} in $G$ denoted by $f$-SDS($v;k_1,\ldots,$ $k_{f};\lambda$) 
if for each $\alpha \in G\setminus\{e\}$,  the constraint
$$ \alpha=xy^{-1},$$
 where  $x,y\in D_i$ for some  $i \in \{1,2,\ldots,f\}$, has exactly $\lambda$ solutions.
\end{definition}
Clearly, $D_1,\ldots,D_f$ is a  $f$-SDS($v;k_1,\ldots,$ $k_{f};\lambda$) in $G$ if and only if  $\Phi(D_1),\ldots,\Phi(D_f)$ is an  $f$-SDS($v;k_1,\ldots,$ $k_{f};\lambda$) in $\Phi(G)$ for any isomorphism $\Phi:G\rightarrow \Phi(G)$.
\subsection{The group ring notation}
First, we introduce the group ring notation that will be used in the proofs.
\begin{definition}
Let $G$ be a multiplicatively written finite abelian group and $R$ be a ring.  
  Then, the {\em group ring} of $G$ over $R$ is the set denoted by $R[G]$ defined as:  $$R[G]= \left\lbrace \sum_{g \in G} a_g g \ |\ a_g \in R\right\rbrace. $$ 
\end{definition}
$R[G]$ is a free $R$-module of rank $|G|$. Any group isomorphism 
$\Phi:G\rightarrow \Phi(G)$ extends linearly to a module  and group ring isomorphism between $R[G]$ and $R[\Phi(G)]$, where 
$$\Phi(\sum_{g \in G} a_g g)= \sum_{g \in G} 
a_{g} \Phi(g)=\sum_{\Phi(g) \in \Phi(G)} 
a_{\Phi(g)} \Phi(g).  $$ 
If $G$ is a multiplicatively written group, then multiplication and addition in $R[G]$ are defined in the same way as in the  ring of formal Laurent series 
$R[[x_1,\ldots x_n]]$. 
If $G$ is additively written, then there exists an isomorphism $\Phi:G\rightarrow\ZZZ$ for some $s_1,\ldots,s_r$. In this case, addition in $R[G]$ is defined 
the same way as in the case when $G$ is multiplicatively written. The multiplication  of two elements $u,v\in R[G]$ is defined as $$u*v=(\Theta\Phi)^{-1}(\Theta\Phi(u)\Theta\Phi(v)).$$
 For short hand notation, we define the {\em power} of a group ring element in the following way.
\begin{definition}\label{Wbar}
If $W=\sum_{ g \in G } a_g g$ is an element of $R[G]$ and $t$ some integer, then $$W^{(t)} = \sum_{ g \in G } a_g g^t,\quad \xoverline{W}= \sum_{ g \in G } \bar{a}_g g, \quad\text{and} \quad |W|=\sum_{g \in G}|a_g|.$$
\end{definition}
The following are two remarks concerning Definition~\ref{Wbar}.
\begin{enumerate}
\item For a group ring element $A$ in this paper we always have $\xoverline{A}=A$.

\item The element $\left( \sum_{ g \in G } a_g g \right)^{(t)}$ is not the same as the element  $\left( \sum_{ g \in G } a_g g \right)^{t}$.
\end{enumerate}
 Let $D
 \subseteq G$ with $|D|=k$
  and $A=\sum_{ g \in D }  g$. Then, $D$ is a DS$(v,k,\lambda)$  if and only if $$AA^{(-1)}=(k-\lambda)(1)+\lambda \left(\G\right) \; \in \; \mathbb{Z}[G].$$

We can think of a $G$-array  as a matrix. 
Let $\M$ be a matrix whose rows and columns are indexed by the elements in $G$. 
Define
\begin{equation*}
P=\{g\, |\, m_{1,g}=+1\},\label{eqnPandN}
\end{equation*}
and
\begin{equation*}
 N = \{ g \,| \, m_{1,g}=-1 \}.
\end{equation*}
Let the $G$-array $m_{1,g}$ be the  first row of $\M$. Then, 
the remaining rows of $\M$ can be obtained by setting 
\begin{equation*}
	m_{g,h}=
	\begin{cases}
		\phantom{-}1, \; \text{if} \; gh^{-1} \in P,\\
		-1, \; \text{if} \; gh^{-1} \in N.
	\end{cases}
\end{equation*}
A matrix  developed this way is called {\em $G$-developed} or {\em $G$-circulant}.

For a cyclic group $G$, if the rows and columns of a matrix $\M$ are indexed  by successive powers of a generator of $G$, then the $G$-developed 
matrix $\M$ is called {\em circulant}. 
Alternatively, a circulant matrix $\A=circ(\aaa)$ is determined by its first column, where each column (row) of $\A$ is a cyclic down (right) shift of the vector $\aaa$.
 An $m_1m_2 \times m_1m_2$ 
matrix $\C$ is said to be {\em block-circulant} if it is of the form
\begin{equation}\label{eqn:circ}
\C= circ(\C_0,\C_1,\ldots, \C_{m_2-1})=
\left[
\begin{array}{llll}
 \C_0    & \C_{m_2-1} & \cdots  & \C_1   \\
 \C_1    & \C_0    & \cdots  & \C_2   \\
 \C_2    & \C_1    & \cdots  & \C_3   \\
 \vdots  & \vdots        & \ddots & \vdots\\
 \C_{m_2-1}& \C_{m_2-2} & \cdots  & \C_0  
\end{array}
\right],
\end{equation}
where the $\C_j$ are $m_1 \times m_1$ matrices.
If each $\C_i$ in equation~(\ref{eqn:circ}) is itself also circulant then $\C$ is a block-circulant matrix of circulant matrices.
More generally, if the group $G$ is abelian but not cyclic then $G\cong \ZZZ$ for some $r\geq2$ and the $G$-developed matrix is  
{\em $r$-circulant} that is obtained after applying the $circ(.)$ operator $r$ times.

For a permutation $\Pi$ of indices in $\{1,\ldots,n\}$, let $\PPP_{\Pi}$ be the corresponding $n \times n$ permutation matrix. Then, the {\em automorphism group} Aut$(\A)$ of an $n \times n$ matrix $\A$ is defined to be 
$$ \text{Aut}(\A)=\{\Pi\, | \, \PPP_{\Pi} \A \PPP_{\Pi}^{\top}=\A\}.$$
For a $G$-developed matrix $\A$,  
 if we permute the indices of $\A$ by the action of
multiplication by elements of $G$, then the elements of $G$ can be thought of as a set of permutations matrices that form a subgroup of $ \text{Aut}(\A)$. 
Hence, Aut$(\A) \geq G$ and it is easy to construct 
 examples where Aut$(\A) > G$. The set of all matrices whose automorphism group contains $G$ and entries are in $R$ is isomorphic to $R[G]$. This  follows by taking {$X=G$} on page 4 in~\cite{Iverson}.
 In particular, the products and integer linear combinations of circulant  ($r$-circulant) matrices is circulant ($r$-circulant). 


 There is an injection $\Psi$ of $\{0,1\}$ or $\{-1,1\}$
 $G$-arrays into $\mathbb{Z}[G]$ given by 
  $$\Psi(\aaa)=\sum_{g \in G} a_g g.$$ We say that the 
  $G$-array $\aaa$ {\em corresponds} to $A\in \mathbb{Z}[G]$ if 
  $A=\PPsia$. For a group ring element $\sum_{g \in G} a_g g$ corresponding to a $G$-array $(a_g)$ and an isomorphism $\Phi:G\rightarrow\Phi(G)$ we define $\Phi(A)$ to be
 $$\Phi(A)=\sum_{g \in G}a_g\Phi(g)=\sum_{\Phi(g) \in \Phi(G)}a_g\Phi(g).$$
 Throughout the paper, by abuse of notation,  if a set $H \subseteq G$ appears in a group ring equation, it is understood that  $H=\HHHH$. 
 Moreover, for  $A=\sum_{g \in G} a_g g\in \mathbb{Z}[G]$, we define $$\{A\}:=\{g \in G\, |\, a_g=1\}.$$  
  
The group ring elements that correspond to $G$-arrays are used to calculate the autocorrelation and cross-correlation functions of $G$-arrays, i.e., for a multiplicatively written group $G$, and $G$-arrays $\aaa$ and $\bb$ 
\begin{equation}\label{eqn:Cab}
 C_{\aaa,\bb}(t)=\text{coefficient of $t$ in }A\xoverline{B}^{(-1)},
 \end{equation} where $A=\PPsia,\, B=\PPsib$.

 A matrix $\M$ with entries in $\mathbb{R}$ is {\em symmetric (skew-symmetric)} if $\M=\M^{\top}$ ($\M=-\M^{\top}$).
Next, we define symmetric, skew-symmetric $G$-arrays, and skew-type matrices.
\begin{definition}\label{def:sym}
Let $G$ be a finite group with identity $e$. Let ${\boldsymbol m}=(m_{g})$ be a $\{0,1\}$ 
$G$-array and $M= \sum_{g \in G} m_{g} g$. Then, ${\boldsymbol m}$ or $M$ is  {\em symmetric} if  $M=M^{(-1)}$ and {\em skew-symmetric} if  $M+M^{(-1)}=G+e$ (implying $e\in \{M\}$) or $M+M^{(-1)}=G-e$ (implying $e \notin \{M\}$).
\end{definition}
The following  lemma shows that an isomorphism 
$\Phi:G\rightarrow\Phi(G)$ maps a symmetric (skew-symmetric) $G$-array to a symmetric (skew-symmetric)
$\Phi(G)$-array.
\begin{lemma}\label{lem:symskewiso}
Let $(a_g)$ be a symmetric (skew-symmetric) $G$-array.
Let $\Phi$ be
an isomorphism $\Phi:G\rightarrow\Phi(G)$. Then   
$\Phi((a_g))$ is a symmetric (skew-symmetric) $G$-array.
\end{lemma}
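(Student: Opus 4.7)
The plan is to work entirely in the group ring language introduced just before the lemma and to exploit the fact, already noted in the excerpt, that any group isomorphism $\Phi:G\to\Phi(G)$ extends linearly to a ring isomorphism $\mathbb{Z}[G]\to\mathbb{Z}[\Phi(G)]$. Once this is in hand, the two claims reduce to verifying that the operation $W\mapsto W^{(-1)}$ and the group-sum element $G=\sum_{g\in G}g$ are both preserved by $\Phi$.

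First I would set $A=\sum_{g\in G}a_g g$ so that, by the definition of $\Phi$ on group-ring elements, $\Phi(A)=\sum_{g\in G}a_g \Phi(g)$ is precisely the group-ring element corresponding to $\Phi((a_g))$. Next I would establish the identity $\Phi(A)^{(-1)}=\Phi(A^{(-1)})$; this is immediate from the multiplicativity of $\Phi$ on $G$, since
\[
\Phi(A)^{(-1)}=\sum_{g\in G}a_g\,\Phi(g)^{-1}=\sum_{g\in G}a_g\,\Phi(g^{-1})=\Phi\!\left(\sum_{g\in G}a_g g^{-1}\right)=\Phi(A^{(-1)}).
\]
I would also record the two elementary facts $\Phi(e)=e'$, where $e'$ is the identity of $\Phi(G)$, and $\Phi(G)=\sum_{h\in\Phi(G)}h$, which follow because $\Phi$ is a bijection sending $G$ onto $\Phi(G)$ (here I am using the paper's convention that a subset appearing in a group-ring equation denotes the sum of its elements).

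With these preparations the two cases are one-line verifications. In the symmetric case, the hypothesis $A=A^{(-1)}$ yields $\Phi(A)=\Phi(A^{(-1)})=\Phi(A)^{(-1)}$, so $\Phi((a_g))$ is symmetric by Definition~\ref{def:sym}. In the skew-symmetric case, applying $\Phi$ to the equation $A+A^{(-1)}=G\pm e$ gives
\[
\Phi(A)+\Phi(A)^{(-1)}=\Phi(A)+\Phi(A^{(-1)})=\Phi(G)\pm\Phi(e)=\Phi(G)\pm e',
\]
which is exactly the skew-symmetry condition for $\Phi((a_g))$ as a $\Phi(G)$-array, with the same choice of sign (so the condition $e\in\{A\}$ is transferred correctly to $e'\in\{\Phi(A)\}$).

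There is really no hard step here; the whole argument is bookkeeping. The only point that warrants care is making sure $\Phi$ is applied consistently as the extended ring isomorphism rather than just a set map, so that $\Phi(G+e)$ and $\Phi(G-e)$ split cleanly into $\Phi(G)\pm\Phi(e)$ and so that the group-sum $G$ on one side corresponds to the group-sum of $\Phi(G)$ on the other.
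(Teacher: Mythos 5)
Your proof is correct and follows essentially the same route as the paper's: extend $\Phi$ linearly to a group ring isomorphism and apply it to the defining equations $A=A^{(-1)}$ and $A+A^{(-1)}=G\pm e$. You simply spell out the intermediate identity $\Phi(A)^{(-1)}=\Phi(A^{(-1)})$, which the paper leaves implicit.
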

\begin{proof}
Let $A=\sum_{g \in G}a_gg\in \mathbb{Z}[G]$ and 
$\Phi$ be extended linearly to an isomorphism of $R[G]$ and
$R[\Phi(G)]$.  Then, $A=A^{(-1)}$ implies
 $\Phi(A)=\Phi(A^{(-1)})$ ($M+M^{(-1)}=G+e$ implies
 $\Phi(M)+\Phi(M^{(-1)})=\Phi(G)+\Phi(e)$ and $M+M^{(-1)}=G-e$ implies  $\Phi(M)+\Phi(M^{(-1)})=\Phi(G)-\Phi(e)$).
\end{proof}
The following  lemma  follows immediately from Definition~\ref{def:sym}.
\begin{lemma}\label{lem:symskewG}
Let $G$ be a finite group with identity $e$ and $M$ be the group ring element corresponding to ${\boldsymbol m}$. Then, a  $\{0,1\}$ $G$-array ${\boldsymbol m}$ is symmetric
(skew-symmetric) if and only if $\{M\}=\{M^{(-1)}\}$
($\{M\}\cup\{M^{(-1)}\}=G\backslash e$, $\{M\}\cap\{M^{(-1)}\}=\emptyset$ when $e \notin \{M\}$ and $\{M\}\cup\{M^{(-1)}\}=G$, 
$\{M\}\cap\{M^{(-1)}\}=e$ when $e \in \{M\}$).
\end{lemma}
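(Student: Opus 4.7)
The plan is simply to unpack Definition~\ref{def:sym} coefficient by coefficient, noting that because $\boldsymbol m$ is a $\{0,1\}$ $G$-array, $M=\sum_{g\in G}m_g g$ is determined by its support, so two $\{0,1\}$-valued elements of $\mathbb{Z}[G]$ coincide iff their supports coincide, and every coefficient of $M+M^{(-1)}$ lies in $\{0,1,2\}$.

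For the symmetric case, the equality $M=M^{(-1)}$ means that the $\{0,1\}$-elements $M$ and $M^{(-1)}$ have the same support, i.e.\ $\{M\}=\{M^{(-1)}\}$; conversely, equal supports force equal group ring elements. This disposes of the symmetric equivalence.

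For the skew-symmetric case, I will read off the coefficient of each $g\in G$ in the identity $M+M^{(-1)}=G\pm e$. This coefficient is $[g\in\{M\}]+[g\in\{M^{(-1)}\}]\in\{0,1,2\}$, and because $e^{-1}=e$, we have $e\in\{M\}$ iff $e\in\{M^{(-1)}\}$, so the coefficient at $e$ is either $0$ or $2$. In the identity $M+M^{(-1)}=G+e$, the coefficient at $e$ is $2$ and at every other $g$ is $1$; comparing forces $e\in\{M\}$, $\{M\}\cap\{M^{(-1)}\}=\{e\}$, and $\{M\}\cup\{M^{(-1)}\}=G$. In the identity $M+M^{(-1)}=G-e$, the coefficient at $e$ is $0$ and at every other $g$ is $1$; comparing forces $e\notin\{M\}$, $\{M\}\cap\{M^{(-1)}\}=\emptyset$, and $\{M\}\cup\{M^{(-1)}\}=G\setminus\{e\}$. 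Each implication is reversible, since reconstructing the group ring sum from a support prescription described this way recovers exactly $G+e$ or $G-e$.

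There is no real obstacle; this is a direct coefficient comparison, which is why the authors flag it as immediate from Definition~\ref{def:sym}. The only point requiring a moment of care is remembering that $e^{-1}=e$ forces the coefficient of $e$ in $M+M^{(-1)}$ to be even, which is what distinguishes the two skew cases.
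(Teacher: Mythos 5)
Your proof is correct and matches the paper's intent: the paper gives no proof at all, stating only that the lemma ``follows immediately from Definition~\ref{def:sym}'', and your coefficient-by-coefficient comparison (using that each coefficient of $M+M^{(-1)}$ lies in $\{0,1,2\}$ and that $e^{-1}=e$ forces the coefficient at $e$ to be even) is exactly the routine verification being left to the reader.
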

A matrix $\M$ is  of {\em skew-type} if $\text{Diag}(\M)=\I$ and
$(\M-\text{Diag}(\M))$ is skew-symmetric, where $\text{Diag}(\M)$ is the diagonal matrix obtained from $\M$ by replacing each non-diagonal entry of $\M$ with $0$. 
Now, it is plain to see the following lemma.
\begin{lemma}
Let ${\boldsymbol m}=(m_{g})$ be a  $\{0,1\}$ $G$-array with $m_e=1$. 
Let $\M$ be the group developed matrix obtained by using $m_{e,g}=m_g$ as its first row and $\J_{|G|\times|G|}$ be the $|G|\times|G|$ matrix of all $1$s.
Then $2\M-\J_{|G|\times|G|}$ is symmetric (skew-type) if and only if $M=\sum_{g \in G}m_gg$ is symmetric (skew-symmetric).
\end{lemma}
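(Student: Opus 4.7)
The plan is to translate the matrix-level conditions on $2\M-\J_{|G|\times|G|}$ into coefficient-level identities, and then recognize those identities as exactly the group-ring conditions on $M$ given in Definition~\ref{def:sym}. The key observation is that in a $G$-developed matrix the entries have the form $m_{g,h}=m_{g^{-1}h}$, so matrix transposition corresponds precisely to the group-ring involution $M\mapsto M^{(-1)}$.

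For the symmetric case, I would first note that $\J_{|G|\times|G|}$ is itself symmetric, so $2\M-\J_{|G|\times|G|}$ is symmetric if and only if $\M$ is. Writing out $\M=\M^{\top}$ entrywise gives $m_{g^{-1}h}=m_{h^{-1}g}$ for all $g,h\in G$; setting $t=g^{-1}h$ collapses this to $m_t=m_{t^{-1}}$ for every $t\in G$, which is exactly $M=M^{(-1)}$.

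For the skew-type case, the hypothesis $m_e=1$ makes the condition $\mathrm{Diag}(2\M-\J_{|G|\times|G|})=\I$ automatic, since the diagonal entries are $2m_{g,g}-1=2m_e-1=1$. The remaining condition, that $(2\M-\J_{|G|\times|G|})-\I$ is skew-symmetric, rearranges to the matrix identity $\M+\M^{\top}=\J_{|G|\times|G|}+\I$. Reading this off at position $(g,h)$ yields $m_{g^{-1}h}+m_{h^{-1}g}=1+\delta_{g,h}$; substituting $t=g^{-1}h$, this says $m_t+m_{t^{-1}}=1$ for all $t\neq e$ together with $2m_e=2$. In the group ring, the coefficient of $g$ in $M+M^{(-1)}$ is $m_g+m_{g^{-1}}$, so the above is precisely $M+M^{(-1)}=G+e$. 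Since $m_e=1$ forces $e\in\{M\}$, this is the relevant branch of the skew-symmetric definition.

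There is no real obstacle here; the proof is essentially bookkeeping, once one uses that the $G$-development $m_{g,h}=m_{g^{-1}h}$ turns matrix transpose into the $(-1)$-involution on the group ring. The only minor care needed is to observe that the skew-symmetric definition has two branches, and that $m_e=1$ selects the $M+M^{(-1)}=G+e$ branch — the other branch would correspond to $m_e=0$, which is excluded by hypothesis.
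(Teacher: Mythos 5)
Your proof is correct and is exactly the direct verification the paper has in mind (the paper omits the proof, calling the lemma ``plain to see''): transposition of the $G$-developed matrix corresponds to the involution $M\mapsto M^{(-1)}$, and the hypothesis $m_e=1$ both makes $\mathrm{Diag}(2\M-\J_{|G|\times|G|})=\I$ automatic and selects the $M+M^{(-1)}=G+e$ branch of skew-symmetry. No gaps.
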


\subsection{Legendre $G$-array pairs}
First, we define Legendre $G$-array pairs.
\begin{definition} \label{DefLegendrePair}
Let $G$ be a multiplicatively written finite abelian group with $|G|=n$. 
Then, a pair of $\{-1,1\}$  $G$-arrays  $(\aaa=(a_g),\bb=(b_g))$ form a {\em Legendre $G$-array pair} if $ \sum_{g \in G} a_g =\sum_{g \in G}b_g$ and 
\begin{equation}\label{eqn:A+B}
AA^{(-1)}+BB^{(-1)}=
\left(|A|+|B|\right)(1)-2(G-1), 
\end{equation}
where $A$ and $B$ are the group ring elements
associated with $\aaa$ and $\bb$.
\end{definition} 
By applying the principal character to the group ring equation~(\ref{eqn:A+B}) we get
\begin{align}
	\chi_0 \left( AA^{(-1)}+BB^{(-1)} \right) =&\,\, \chi_0 \left( (|A|+|B|)(1)-2(G-1) \right) \nonumber\\
	\chi_0(A)^2 + \chi_0(B)^2 =&\,\, 2n-2(n-1) \nonumber\\
	a^2 + b^2 =&\,\, 2. \label{eqn:a2b2}
\end{align}
This equation implies that $a=b\in\{-1,1\}$, where 
$a=\sum_{g \in G} a_g=b=\sum_{g \in G}b_g$. Thus $|G|=n$ must be odd for a Legendre $G$-array pair to exist. Hence, each $G$-array in a Legendre $G$-array pair must be balanced. 

By equations~(\ref{eqn:EqnACF}), (\ref{eqn:A+B}), and~(\ref{eqn:a2b2}) we get the following  definition of Legendre $\{0,1\}$  
$G$-array pairs.
 \begin{definition} \label{DefLegendrePair2}
 Let $G$ be a multiplicatively written finite abelian group.
A pair of $\{0,1\}$  $G$-arrays  $(\aaa=(a_g),\bb=(b_g))$ form a {\em Legendre $G$-array pair} if $ \sum_{g \in G} a_g =\sum_{g \in G}b_g$, and 
\begin{equation*}\label{eqn:A+B2}
AA^{(-1)}+BB^{(-1)}=\begin{cases}
2\left(\frac{|G|+1}{2}\right)(1)+\frac{|G|+1}{2}(G-1) \quad \text{if $\sum_{g \in G} a_g=\sum_{g \in G} b_g=\frac{|G|+1}{2}$,}\\
2\left(\frac{|G|-1}{2}\right)(1)+\frac{|G|-3}{2}(G-1) \quad \text{if $\sum_{g \in G} a_g=\sum_{g \in G} b_g=\frac{|G|-1}{2}$.}
\end{cases}  
\end{equation*}
\end{definition}
The following lemma is plain to prove.
\begin{lemma}\label{iso}
 Let $\Phi:G\rightarrow \Phi(G)$ be an isomorphism. Then, $((a_g),(b_g))$ is a Legendre $\{-1,1\}$  ($\{0,1\}$)  
 $G$-array pair if and only if $(\Phi((a_g)), \Phi((b_{g})))$ is a Legendre $\{-1,1\}$  ($\{0,1\}$)  $\Phi(G)$-array pair. Hence, whenever we construct a Legendre $\{-1,1\}$ ($\{0,1\}$)  $G$-array pair  we have also constructed a Legendre $\{-1,1\}$ ($\{0,1\}$) $\Phi(G)$-array pair.
 \end{lemma}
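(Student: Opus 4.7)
The plan is to reduce the lemma to the fact, already stated in the group ring subsection of the excerpt, that any group isomorphism $\Phi:G\rightarrow \Phi(G)$ extends $\mathbb{Z}$-linearly to a ring isomorphism $\Phi:\mathbb{Z}[G]\rightarrow \mathbb{Z}[\Phi(G)]$. Once this extension is in hand, everything needed will be an immediate verification that the various pieces appearing in Definitions~\ref{DefLegendrePair} and~\ref{DefLegendrePair2} are preserved by $\Phi$.

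First I would record the following elementary preservation properties of $\Phi$, none of which requires more than unwinding notation. (i) For any $W=\sum_{g\in G}w_gg\in \mathbb{Z}[G]$ one has $\Phi(W^{(-1)})=\Phi(W)^{(-1)}$, since $\Phi$ is a group homomorphism and so $\Phi(g^{-1})=\Phi(g)^{-1}$. (ii) $\Phi(1_G)=1_{\Phi(G)}$, since isomorphisms send identity to identity. (iii) $\Phi(G)=\sum_{g\in G}\Phi(g)=\sum_{h\in\Phi(G)}h$, since $\Phi$ is a bijection on $G$; in the notation at the end of the group-ring subsection, $\Phi$ sends the formal sum $G$ to the formal sum $\Phi(G)$. (iv) $|\Phi(W)|=|W|$, since the coefficients are merely permuted. (v) $\sum_{g\in G}a_g=\sum_{h\in \Phi(G)}a'_h$, where $a'_{\Phi(g)}=a_g$ defines the $\Phi(G)$-array $\Phi(\aaa)$.

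Next, assume $(\aaa,\bb)$ is a Legendre $\{-1,1\}$ $G$-array pair with associated group ring elements $A,B$. Apply $\Phi$ to both sides of equation~(\ref{eqn:A+B}); using that $\Phi$ is a ring homomorphism together with properties (i)--(iv) above, the left-hand side becomes $\Phi(A)\Phi(A)^{(-1)}+\Phi(B)\Phi(B)^{(-1)}$ and the right-hand side becomes $(|\Phi(A)|+|\Phi(B)|)(1_{\Phi(G)})-2(\Phi(G)-1_{\Phi(G)})$. Property (v) translates the balance condition $\sum a_g=\sum b_g$ into the corresponding equality for $\Phi(\aaa)$ and $\Phi(\bb)$. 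Thus $(\Phi(\aaa),\Phi(\bb))$ satisfies Definition~\ref{DefLegendrePair}. The converse is identical upon applying the inverse isomorphism $\Phi^{-1}$, which is again a group isomorphism and therefore enjoys the same preservation properties. The $\{0,1\}$ statement is handled identically, applying $\Phi$ to the two-case equation in Definition~\ref{DefLegendrePair2}: the constants on the right depend only on $|G|=|\Phi(G)|$ and on the common value $\sum a_g=\sum b_g$, both of which are manifestly preserved.

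There is essentially no obstacle here; the content of the lemma is exactly that all the quantities defining a Legendre pair (sum of coefficients, $|\cdot|$, the $(-1)$ operator, the formal sum $G$, the identity $1$, and multiplication) live entirely inside the group ring and are intrinsic to its abstract structure. The only care needed is to keep the bookkeeping consistent between the $G$ side and the $\Phi(G)$ side, which is achieved by noting the identification $\Phi(\sum a_g g)=\sum a'_{\Phi(g)}\Phi(g)$ made earlier in the excerpt.
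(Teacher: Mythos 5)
Your proof is correct and is exactly the argument the paper has in mind: the paper states this lemma without proof (``plain to prove''), and the intended justification is precisely your reduction to the linear extension of $\Phi$ to a group ring isomorphism, mirroring the paper's own proof of Lemma~\ref{lem:symskewiso}. Your bookkeeping of the preservation properties (i)--(v) and the use of $\Phi^{-1}$ for the converse are all sound.
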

 The following well-known theorem connects supplementary difference sets in finite abelian groups and Legendre $G$-array pairs.
 \begin{theorem}\label{thm:Legendre}
 Let $G$ be an abelian group of order $n$.
 Let $(\aaa,\bb)$ be a $\{0,1\}$ or $\{-1,1\}$ $G$-array pair and $(M,N)$ be the subsets of $G$ such that $M=\{g\in G\,|\,a_g=1\}$ and $N=\{g\in G\,|\,b_g=1\}$. Then, $(M,N)$ is a $2$-SDS$(n;(n+1)/2,(n+1)/2;(n+1)/2)$ or a $2$-SDS$(n;(n-1)/2,(n-1)/2;(n-3)/2)$ if and only if $(\aaa,\bb)$ is a 
 Legendre $G$-array pair.
\end{theorem}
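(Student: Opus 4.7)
The plan is to work entirely in $\mathbb{Z}[G]$ and translate both sides of the equivalence into group ring equations that can be compared directly. Working with the $\{0,1\}$ formulation (Definition~\ref{DefLegendrePair2}) is most convenient, and the $\{-1,1\}$ case then follows from the equivalence between Definitions~\ref{DefLegendrePair} and~\ref{DefLegendrePair2} already established in the excerpt via equations~(\ref{eqn:EqnACF}), (\ref{eqn:A+B}), and~(\ref{eqn:a2b2}).

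First I would translate the $2$-SDS condition into group ring form. Setting $A=\sum_{g \in M}g$ and $B=\sum_{g\in N}g$, the coefficient of $\alpha\in G$ in $AA^{(-1)}$ (resp.\ $BB^{(-1)}$) counts pairs $(x,y)\in M\times M$ (resp.\ $N\times N$) with $xy^{-1}=\alpha$, so the coefficient of $e$ equals $|M|$ (resp.\ $|N|$). Hence, writing $|M|=|N|=k$, Definition~\ref{Def:SDS} with $f=2$ is equivalent to the single group ring identity
\begin{equation*}
AA^{(-1)}+BB^{(-1)}=2k\,(1)+\lambda(G-1),
\end{equation*}
which is the same translation used in the excerpt for ordinary difference sets. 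I would then substitute the two candidate parameter choices and verify they match the two cases of Definition~\ref{DefLegendrePair2}: for $k=(n+1)/2$, $\lambda=(n+1)/2$ the right-hand side rewrites as $2\bigl(\tfrac{n+1}{2}\bigr)(1)+\tfrac{n+1}{2}(G-1)$, and for $k=(n-1)/2$, $\lambda=(n-3)/2$ it rewrites as $2\bigl(\tfrac{n-1}{2}\bigr)(1)+\tfrac{n-3}{2}(G-1)$, exactly reproducing the two cases. Note that $|M|=|N|=k$ automatically gives $\sum_g a_g=\sum_g b_g=k$, matching the side condition in Definition~\ref{DefLegendrePair2}.

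For the converse direction (Legendre pair $\Rightarrow$ SDS), the parameter $\lambda$ is recovered directly as the coefficient of any non-identity element in $AA^{(-1)}+BB^{(-1)}$, and the value of $k=|M|=|N|$ is pinned down by applying the principal character $\chi_0$ to the Legendre equation and using $\chi_0(A)=\chi_0(B)=k$; in each case this yields $2k^2=\tfrac{(n+1)^2}{2}$ or $2k^2=\tfrac{(n-1)^2}{2}$, forcing $k\in\{(n+1)/2,(n-1)/2\}$. With $k$ and $\lambda$ identified, the group ring identity is exactly the SDS characterization established in the first paragraph. There is no genuine obstacle here; the only care needed is to keep the two parameter cases bookkept correctly and to make sure the passage between the $\{-1,1\}$ and $\{0,1\}$ formulations is invoked only via the equivalence already cited, so that the theorem is proved uniformly for both.
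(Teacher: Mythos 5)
Your proposal is correct and follows essentially the same route as the paper, which simply observes that the result follows by comparing Definition~\ref{Def:SDS} with $f=2$ to Definitions~\ref{DefLegendrePair} and~\ref{DefLegendrePair2}; you have merely written out that comparison explicitly via the group-ring translation of the SDS condition and the principal-character check of the parameters. Both the parameter substitutions and the recovery of $k$ and $\lambda$ in the converse are accurate, so this is a correct, more detailed rendering of the paper's one-line argument.
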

\begin{proof}
If $G$ is written multiplicatively, then the result follows by comparing Definition~\ref{Def:SDS} for $f=2$ 
to Definition~\ref{DefLegendrePair} 
 (Definition~\ref{DefLegendrePair2})
 for $\{-1,1\}$ ($\{0,1\}$) $G$-arrays. 
\end{proof}
 It is conjectured that  a Legendre $\mathbb{Z}_n$-array pair exists for all odd $n$ \cite{SeberryGLPairs}. 
 A Legendre $\mathbb{Z}_n$-array pair is known to exist when:
\begin{itemize}
	\item $n$ is a prime, see~\cite{SeberryGLPairs}; 
	\item $2n+1$ is a prime power (Szekeres, \cite{Wallis}); 
	\item $n=2^m-1$ for $m \geq 2$, see~\cite{Schroeder}; 
	\item $n=p_1(p_1+2)$, with $p_2=p_1+2$, where $p_1,p_2$  are odd primes~\cite{twin}. 
\end{itemize}
Currently, $n=77$ is the smallest $n$  for which no Legendre $\mathbb{Z}_n$-array pair is known. 

An $N\times N$ {\em Hadamard matrix}, $\HH$, is a $\pm1$ matrix such that $\HH\HH^{\top}=N\I_N$ where $\I_N$ is the identity matrix of order $N$. The following theorem showing  that the existence of a Legendre $G$-array pair implies the existence of a $(2|G|+2)\times(2|G|+2)$ Hadamard matrix is well-known.
\begin{theorem}\label{thm:bordered}
  Let $(\aaa,\bb)$ be a Legendre $\{-1,1\}$ $G$-array pair, with $|G|=n$
  such that $\sum_{g \in G}a_g=\sum_{g \in G}b_g=1$. Let $(\aaa,\bb)$ be developed into $G$ indexed $n \times n$ matrices 
  $\A$ and $\B$ by taking $\aaa$ and $\bb$ as the first row of 
  $\A$ and $\B$ respectively. Let
$$\HH_{sym}=\begin{bmatrix}
- & - & + & \cdots & + & + & \cdots & + \\ 
- & + & + & \cdots & + & - & \cdots & - \\ 
+ & + &  &  &  &  &  &  \\ 
\vdots & \vdots &  & \A &  &  & \B &  \\ 
+ & + &  &  &  &  &  &  \\ 
+ & - &  &  &  &  &  &  \\ 
\vdots & \vdots &  & \B^{\top} &  &  & -\A^{\top} &  \\ 
+ & - &  &  &  &  &  & 
\end{bmatrix}$$
and
$$\HH_{skew}=\begin{bmatrix}
+ & + & + & \cdots & + & + & \cdots & + \\ 
- & + & + & \cdots & + & - & \cdots & - \\ 
- & - &   &  &  &  &  &  \\ 
\vdots & \vdots &  & \A &  &  & \B &  \\ 
- & - &  &  &  &  &  &  \\ 
- & + &  & &  &  &  &  \\ 
\vdots & \vdots &  & -\B^{\top} &  &  & \A^{\top} &  \\ 
- & + &  &  &  &  &  & 
\end{bmatrix}.$$
Then, both $\HH_{sym}$  and $\HH_{skew}$ are Hadamard matrices.
Moreover, $\HH_{sym}$  ($\HH_{skew}$) is symmetric (skew-type) Hadamard matrix if and only if  $\aaa$ is symmetric (skew-symmetric).
\end{theorem}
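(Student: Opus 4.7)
The plan is to verify the Hadamard property by a direct block-wise computation of $\HH\HH^{\top}$, and then to read off the symmetric/skew-type characterizations from the natural transpose structure of the border.

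First I would recast the Legendre pair identity~(\ref{eqn:A+B}) in matrix form. Since $|A|=|B|=n$, the identity reads $AA^{(-1)}+BB^{(-1)}=(2n+2)(1)-2G$ in $\mathbb{Z}[G]$, which under the correspondence between $G$-circulants and $\mathbb{Z}[G]$ (with $\A^{\top}$ corresponding to $A^{(-1)}$) becomes $\A\A^{\top}+\B\B^{\top}=(2n+2)\I_{n}-2\J_{n\times n}$. Two additional structural consequences of $G$ being abelian will be used throughout: $\A\A^{\top}=\A^{\top}\A$ (and similarly for $\B$) and $\A\B=\B\A$. Finally, the hypothesis $\sum_{g\in G}a_g=\sum_{g\in G}b_g=1$ gives $\A\1=\A^{\top}\1=\B\1=\B^{\top}\1=\1$, since a $G$-circulant has equal row and column sums.

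Next I would partition each of $\HH_{sym}$ and $\HH_{skew}$ as a $2\times 2$ block matrix with corner blocks of sizes $2\times 2$, $2\times 2n$, $2n\times 2$, and $2n\times 2n$, and evaluate $\HH\HH^{\top}$ block by block. The top-left $2\times 2$ block is immediate and equals $(2n+2)\I_2$. The $2\times 2n$ off-diagonal block of $\HH\HH^{\top}$ vanishes after inserting the identities $\A\1=\B\1=\A^{\top}\1=\B^{\top}\1=\1$ and cancelling in pairs. The $2n\times 2n$ main block of $\HH\HH^{\top}$ splits into a $2\J$ contribution from the border columns and a contribution from the core $M$; commutativity of $G$-circulants eliminates the cross terms of $MM^{\top}$, while its diagonal blocks both reduce to $\A\A^{\top}+\B\B^{\top}$, which via the matrix identity above equals $(2n+2)\I_n-2\J$. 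The two $\J$-terms cancel and we are left with $(2n+2)\I_{2n}$. The $\HH_{skew}$ case is parallel: a few sign changes in $U$, $L$, and the lower half of the central block propagate through the same calculation and cancel in exactly the same pattern.

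For the symmetric/skew-type characterizations, the border of each matrix is arranged so that the $2\times 2$ corner $U$ is already symmetric in the $\HH_{sym}$ case (and satisfies $U+U^{\top}=2\I_2$ in the $\HH_{skew}$ case), and the $2\times 2n$ blocks automatically match the $2n\times 2$ blocks under transposition. The structural condition therefore reduces to the central $2n\times 2n$ block $M$. For $\HH_{sym}$, $M$ is symmetric iff $\A=\A^{\top}$, i.e.\ $A=A^{(-1)}$, which is Definition~\ref{def:sym} of a symmetric $\aaa$. For $\HH_{skew}$, the skew-type condition reduces to $\A+\A^{\top}=2\I_n$, equivalently $A+A^{(-1)}=2(1)$ in $\mathbb{Z}[G]$; this forces $a_e=1$ and, in $\{0,1\}$ form, reads $M+M^{(-1)}=G+e$, the first case of skew-symmetry in Definition~\ref{def:sym}. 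The opposite case $a_e=-1$ is ruled out by the hypothesis $\sum a_g=1$, since on an odd-order group a skew-symmetric $\{-1,1\}$-array satisfies $\sum a_g=a_e$. The main obstacle is pure bookkeeping: keeping signs straight across the border and invoking commutativity of $G$-circulants at the right moments; no deeper ingredient than the Legendre pair identity itself is needed.
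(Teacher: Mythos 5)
Your proof is correct and takes essentially the same route as the paper's: the paper compresses your block-wise computation of $\HH\HH^{\top}$ into the single assertion that $\HH$ is Hadamard iff $C_{\aaa}(t)+C_{\bb}(t)=-2$ for all $t\neq e$, which is exactly the Legendre condition, and likewise reduces the symmetry/skew-type claims to the corresponding property of $\A$. Your version merely supplies the bookkeeping the paper leaves implicit (border orthogonality from $\sum_{g}a_g=\sum_{g}b_g=1$, vanishing cross terms from commutativity of $G$-circulants, and the observation that skew-symmetry plus balance forces $a_e=1$).
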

\begin{proof}
Let $e$ be the identity element in $G$.
The matrix $\HH_{sym}$ ($\HH_{skew}$) is a Hadamard matrix if and only if $C_{\aaa}(t)+C_{\bb}(t)=-2$ for all $t \in G\setminus \{e\}$.
Then, by using equation~(\ref{eqn:Cab}) for $C_{\aaa,\aaa}$
and $C_{\bb,\bb}$, we get $C_{\aaa}(t)+C_{\bb}(t)=-2$ for all $t \in G\setminus \{e\}$ if and only if $(\aaa,\bb)$ is a Legendre $G$-array pair.
The matrix $\HH_{sym}$ ($\HH_{skew}$) is symmetric (skew-type) if
 and only if $\A$ is symmetric (skew-type). The result now follows as $\A$ is symmetric (skew-type) if and only if $\aaa$ is symmetric (skew-symmetric).
\end{proof}

Consider the action of the group $\ZZZS$
on the group $\ZZZ$ defined by 
$$  ((a_1,b_1),\ldots,(a_r,b_r))(g_1,\ldots,g_r)=(b_1g_1+a_1,
 \ldots,b_rg_r+a_r) $$
 if the group $\ZZZ$ is written additively,  and
 $$  ((a_1,b_1),\ldots,(a_r,b_r))(g_1,\ldots,g_r)=(g_1^{b_1}a_1, \ldots,g_r^{b_r}a_r) $$
 if $\ZZZ$ is written multiplicatively,
where $\mathbb{Z}^*_{s_i}$ is the multiplicative group of the ring $\mathbb{Z}_{s_i}$ and $\rtimes$ is the {\em semidirect product} as defined 
in~\cite[p. 167]{RotmanBook}. This group action can be extended linearly to 
$\mathbb{Z}[G]$. Then, 
$\ZZZS$ 
acts on a $\ZZZ$-array, and two $\ZZZ$-arrays are called {\em equivalent} if one can be
 obtained from the other by applying the elements of the group 
$$\ZZZS.$$ It is well-known that if $\aaa$ and $\aaa'$ are equivalent $\ZZZ$-arrays then  $\aaa$ and $\aaa'$ have the same  peak correlation and the same correlation energy.
We call two Legendre pairs $(\aaa,\bb)$ and $(\aaa',\bb')$ {\em equivalent}
 if $\{\aaa,\bb\}=\{\tau\aaa',\beta\bb'\}$, where 
 $\tau=((\tau_1,\tau_1^*),\ldots,(\tau_r,\tau_r^*))$, $\beta=((\beta_1,\beta_1^*),\ldots,(\beta_r,\beta_r^*))$ such that $\beta_i,\tau_i \in \mathbb{Z}_{s_i}$, $\beta^*_i,\tau^*_i \in \mathbb{Z}^*_{s_i}$ and $\tau_i^*=\pm\beta_i^*$ for $i=1,\ldots,r$~\cite{SeberryGLPairs}.
If $(\aaa, \bb)$ is a Legendre $\ZZZ$-array pair,
and $(\aaa',\bb')$ is equivalent to $(\aaa,\bb)$, 
 then $(\aaa', \bb')$ is also a Legendre $\ZZZ$-array pair.
  
 The following lemma determines exactly which subgroup of 
$\ZZZS$ preserves symmetry 
 (skew-symmetry) of a symmetric (skew-symmetric) $\ZZZ$-array.
 \begin{lemma}\label{lem:symskew}
 The group $\ZZZSZ$ preserves the symmetry (skew-symmetry) of a symmetric (skew-symmetric) $\{-1,1\}$ or $\{0,1\}$ $\ZZZ$-array.
 \end{lemma}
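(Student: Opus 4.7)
The plan is to observe that each element of $\ZZZSZ$ acts on $G=\ZZZ$ as a group automorphism that commutes with inversion, and to use this to show that the defining group-ring identities of symmetry and skew-symmetry from Definition~\ref{def:sym} are preserved under the linear extension of the action to $\mathbb{Z}[G]$.

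First I would spell out the action: an element $\sigma=((0,b_1),\ldots,(0,b_r))\in \ZZZSZ$ sends $g=(g_1,\ldots,g_r)$ (in additive notation) to $\sigma(g)=(b_1g_1,\ldots,b_rg_r)$. Since each $b_i\in\mathbb{Z}^*_{s_i}$, this map is a group automorphism of $G$; in particular $\sigma(e)=e$, $\sigma$ permutes $G$ so that $\sum_{g\in G}\sigma(g)=G$ in $\mathbb{Z}[G]$, and crucially $\sigma(-g)=-\sigma(g)$. Extending $\sigma$ linearly to $\mathbb{Z}[G]$ then yields $\sigma(A^{(-1)})=\sigma(A)^{(-1)}$ for any $A\in\mathbb{Z}[G]$.

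Given this, the symmetric case is immediate: if $A=A^{(-1)}$ is the group-ring element corresponding to the given $\{0,1\}$ array, then $\sigma(A)=\sigma(A^{(-1)})=\sigma(A)^{(-1)}$, so $\sigma(A)$ is symmetric. The skew-symmetric case follows from $A+A^{(-1)}=G\pm e$ by applying $\sigma$ to both sides and using $\sigma(G)=G$ and $\sigma(e)=e$ to obtain $\sigma(A)+\sigma(A)^{(-1)}=G\pm e$. The corresponding statements for $\{-1,1\}$ arrays follow from the $\{0,1\}$ version via the bijection $a_g\mapsto 2a_g-1$, which identifies the two notions of symmetry and skew-symmetry.

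I do not anticipate any serious obstacle; the only delicate point is the choice of subgroup, and the restriction to the ``linear'' part $\ZZZSZ$ (i.e. trivial translation components) is exactly what is forced: a nonzero $a_i$ would replace $\sigma(-g)$ by $-\sigma(g)+2a$ rather than $-\sigma(g)$, so $\sigma$ would no longer commute with inversion and neither symmetry nor skew-symmetry would be preserved in general.
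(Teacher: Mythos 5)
Your proof is correct and follows essentially the same route as the paper's: the key point in both is that componentwise multiplication by units is a group automorphism of $\ZZZ$ commuting with negation, so it preserves the defining conditions of symmetry and skew-symmetry. The only cosmetic difference is that you phrase these conditions as group-ring identities ($A=A^{(-1)}$, $A+A^{(-1)}=G\pm e$) while the paper works with the equivalent set-theoretic conditions on the set of $1$ indices (cf.\ Lemma~\ref{lem:symskewG}), and your closing remark on why the translation components must be trivial matches the paper's comment following the lemma.
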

\begin{proof}
Let $\aaa$ be a symmetric (skew-symmetric) $\{-1,1\}$ or $\{0,1\}$ $\ZZZ$-array, and 
$A\subset\ZZZ$ be the set of $1$ indices of $\aaa$. 
Then,  $A=-A$ 
(($A\cup -A=\ZZZ\backslash \{0\}$ and $A\cap-A=\emptyset$) or 
($A\cup -A=\ZZZ $ and $A\cap-A=0$)) implies for any $((0,\beta^*_1),\ldots,(0,\beta^*_r))\in \ZZZSZ$ 
$$((0,\beta^*_1),\ldots,(0,\beta^*_r))A=
-((0,\beta^*_1),\ldots,(0,\beta^*_r))A$$ 
\begin{eqnarray*}
(((0,\beta^*_1),\ldots,(0,\beta^*_r))A\cap-((0,\beta^*_1),\ldots,(0,\beta^*_r))A&=&\emptyset \quad \text{or}\\ ((0,\beta^*_1),\ldots,(0,\beta^*_r))A\cap-((0,\beta^*_1),\ldots,(0,\beta^*_r))A&=&0).
\end{eqnarray*}
\end{proof}
In general, Lemma~\ref{lem:symskew} can not be improved as it is easy to construct a symmetric (skew-symmetric) $\mathbb{Z}_s$-array whose symmetry (skew-symmetry) is not preserved by any circulant shifts other than the $0$ shift.

 We fix some notation
that will be used in the rest of the paper.
Let $q=p^m$ for some prime $p$ and positive integer $m$. Let $\mathbb{F}_q$ be the finite field with $q$ elements and $\mathbb{F}_q^*=\langle \alpha\rangle$
be the multiplicative group of $\mathbb{F}_q$, where $\alpha$ is a generator for $\mathbb{F}_q^*$. Let $C_0^{(d,q,\alpha)}=\langle\alpha^d\rangle$ be the multiplicative group generated by $\alpha^d$ in the finite field $\mathbb{F}_q$, where $d$ divides $q-1$. Observe that $C_0^{(d,q,\alpha)}$ does not depend on $\alpha$. Let $C_i^{(d,q,\alpha)}=\alpha^i C_0^{(d,q,\alpha)}$ for $i=0,1,\ldots,d-1$, where $C_i^{(d,q,\alpha)}$ are called {\em cyclotomic classes of order $d$}, see~\cite{StorerBook}.
We will denote  $C_i^{(d,q,\alpha)}$ with $C_i^{d}$
when there is no need to specify $q$ and $\alpha$. The labeling of $C_1^{(d,q,\alpha)}, \ldots,C_{d-1}^{(d,q,\alpha)}$ depends
on $\alpha$, but taking a different choice of primitive root just 
permutes $C_1^{(d,q,\alpha)}, \ldots,C_{d-1}^{(d,q,\alpha)}$.
\subsection{Infinite families of perfect $G$-arrays}\label{sec:families}
First, we survey several known infinite families of perfect $G$-arrays.\\
\noindent \textbf{The Sidelnikov-Lempel-Cohn-Eastman 
$\mathbb{Z}_{q-1}$-arrays:}\\
 Let  $$S=\{\alpha^{2i+1} -1\}_{i=0}^{\frac{q-1}{2}-1}. $$ 
 Let $\aaa$ be a $\{-1,1\}$ or $\{0,1\}$ $(q-1)\times 1$
  vector such that 
$$ a_i=  1  \quad {\rm if}\ \alpha^i\in S.$$
 Then, $\aaa$ is the Sidelnikov-Lempel-Cohn-Eastman 
 $\mathbb{Z}_{q-1}$-array. The Sidelnikov-Lempel-Cohn-Eastman $\mathbb{Z}_{q-1}$-array is always balanced. However, it is perfect if and only if $q=p^m \equiv 3\; (\text{mod} \;4)$, see \cite{Lempel} and \cite{Sidelnikov}. 

\noindent \textbf{The Ding-Helleseth-Martinsen $\mathbb{Z}_{2}\times \mathbb{Z}_p^{m}$-arrays:}\\
\noindent
 Let $p\equiv1$ (mod $4$) and $p^m=s^2+4t^2$, where $s^2=1$ or $t^2=1$. 
  Let $q=p^m\equiv 5 $ $\; ( \text{mod }  8)$, or equivalently, let  
 $p\equiv 5 \; (\text{mod }  8)$ and $m$ be odd.
 Let
 $C_{i,j,\ell}=\left(C^4_i \cup C^4_j, C^4_j \cup C^4_\ell \right)$ for
 $\{i,j,l\} \subset \{0,1,2,3\}$, where $i,j,l$ are  distinct integers.
Let \begin{align*}
(A_1,B_1)=(C^4_0 \cup C^4_1,C^4_1 \cup C^4_3),\,\, \,(A_2,B_2)=(C^4_0 \cup C^4_2,  C^4_2\cup C^4_3) \quad \text{if } \,  t^2=1,\\
(A_3,B_3)=(C^4_0 \cup C^4_1, C^4_0 \cup C^4_3) \quad  \text{if } 
 s^2=1.
 \end{align*} 
 Identify the elements of the finite field $\mathbb{F}_{p^m}$ with its additive group $\mathbb{Z}_p^m$, and let $\langle \omega \rangle=
 \Theta(\mathbb{Z}_2)$, where $\Theta$ be the isomorphism in Section~\ref{sec:introcorrelations}.
 We now use the group $\Theta(\mathbb{Z}_{2}\times \mathbb{Z}_p^{m})$ as an indexing set. 
For each $i\in \{1,2,3\}$, let the equivalence class $i$ Ding-Helleseth-Martinsen $\{-1,1\}$ or $\{0,1\}$ $\Theta(\mathbb{Z}_{2}\times \mathbb{Z}_p^{m})$-array be such that $\Theta(A_i) \cup \Theta(B_i)\omega$ is the set of $1$ indices of the array.
Then, each equivalence class of Ding-Helleseth-Martinsen $\Theta(\mathbb{Z}_{2}\times \mathbb{Z}_p^{m})$-array is almost balanced, and equivalence class $3$ is always perfect, see Theorem 2 in~\cite{DingHelleseth}. In Section~\ref{open} we determine exactly when each of the equivalence class $1$ and  $2$ Ding-Helleseth-Martinsen  $\Theta(\mathbb{Z}_{2}\times \mathbb{Z}_p^{m})$-array is
perfect. This solves one of the two open problems posed in~\cite{Ding2008}.
  Finally,  each equivalence class of $\{-1,1\}$ or $\{0,1\}$ Ding-Helleseth-Martinsen
 $\Theta(\mathbb{Z}_{2}\times \mathbb{Z}_p^{m})$-array $(a_g)$ is used to construct the corresponding  $\{-1,1\}$ or $\{0,1\}$
  Ding-Helleseth-Martinsen
 $\mathbb{Z}_{2}\times \mathbb{Z}_p^{m}$-array as  $\Theta^{-1}((a_g))$.
 \subsection{Infinite families of  Legendre $G$-array pairs}
 Now, we survey several known infinite families of Legendre $G$-array pairs.\\
\noindent \textbf{The Yamada $\mathbb{Z}_{(q-1)/2}$-array
  pairs:}\\
 Let $q=p^m \equiv 3 \; (\hspace{-.25cm}\mod 4)$. 
  Let $$M=\{a:\alpha^{2a}+1 \in C_0^2\},$$ and $$N=\{a:\alpha^{2a}-1 \in C_0^2\}.$$ 
  Then the pair $(M,N)$ is a $2$-SDS$((q-1)/2;(q-3)/4,(q-3)/4;(q-7)/4)$
in $\mathbb{Z}_{(q-1)/2}$.
  Let $\mathbb{Z}_{(q-1)/2}$ index the arrays $\aaa,\bb$, and  $(M, N)$ be the sets of $1$ indices 
  of $(\aaa,\bb)$.
  Then the $\{-1,1\}$ or $\{0,1\}$
   $\mathbb{Z}_{(q-1)/2}$-array pair $(\aaa,\bb)$ is called  a {\em Yamada $\mathbb{Z}_{(q-1)/2}$-array pair}, see~\cite{Yamada}. The $\mathbb{Z}_{(q-1)/2}$-array $\aaa$ is symmetric and $\bb$ is skew-symmetric.

\noindent \textbf{The Szekeres $\mathbb{Z}^m_{p}$-array  pairs:}\\
 Let $q=p^m\equiv 5 \; (\hspace{-.25cm}\mod  8)$, or equivalently, let  
 $p\equiv 5 \; (\hspace{-.25cm}\mod  8)$ and $m$ be odd. Let $$A=C^4_0 \cup C^4_1, \; B=C^4_0 \cup C^4_3.$$ Then the pair $(A,B)$ is a $2$-SDS$(p^m;(q-1)/2,(q-1)/2;(q-3)/2)$
in $\mathbb{Z}^m_{p}$.  Let $\mathbb{Z}^m_{p}$ index the arrays $\aaa,\bb $, and  $(A, B)$ be the sets of $1$ indices 
of $(\aaa,\bb)$. 
Then the $\{-1,1\}$ or $\{0,1\}$ $\mathbb{Z}^m_{p}$-array pair $(\aaa,\bb)$ is called  a {\em Szekeres $\mathbb{Z}^m_{p}$-array pair},  see~\cite{SzekeresCompDS}. 
 Both $\aaa$ and $\bb$ are skew-symmetric. 
 
 \noindent \textbf{The Szekeres-Whiteman $\mathbb{Z}^m_{p}$-array  
 pairs:}\\
  Let $q=p^m$,  $p\equiv 5 \; (\hspace{-.25cm}\mod  8)$ and $m $ be even with $m\geq 2$.
  Let $$A=C^8_0 \cup C^8_1 \cup C^8_2 \cup C^8_3, \; 
  B=C^8_0 \cup C^8_1 \cup C^8_6 \cup C^8_7.$$ 
Then the pair $(A,B)$ is  a $2$-SDS$(p^m;(q-1)/2,(q-1)/2;(q-3)/2)$ in $\mathbb{Z}^m_{p}$.
  Let $\mathbb{Z}^m_{p}$ index the arrays $\aaa,\bb $, and  $(A, B)$ be the sets of $1$ indices of
   $(\aaa,\bb)$. 
  Then the $\{-1,1\}$ or $\{0,1\}$ $\mathbb{Z}^m_{p}$-array pair $(\aaa,\bb)$ is called  a {\em Szekeres-Whiteman $\mathbb{Z}^m_{p}$-array pair}.
  Szekeres~\cite{SzekeresCompDS} proved that a  Szekeres-Whiteman $\mathbb{Z}^m_{p}$-array pair is a Legendre $\mathbb{Z}^m_{p}$-array pair, while Whiteman~\cite{Whiteman} independently showed this result however only for 
  $m\equiv2 \,\,  \text{(mod $4$)}$. It is easy to see that both $\aaa$ and $\bb$ are skew-symmetric. 
  
  \noindent \textbf{The Paley $\mathbb{Z}^m_{p}$-array  pairs:}\\
 Let \begin{align*}
 A=C^2_0,  \; B=C^2_0 \quad &\text{if $p^m\equiv3$ (mod $4$),}\\ 
 A=C^2_1,  \; B=C^2_0 \quad &\text{if $p^m\equiv1$ (mod $4$).}
 \end{align*}
 Then the pair $(A,B)$ is a $2$-SDS$(p^m;(q-1)/2,(q-1)/2;(q-3)/2)$
in $\mathbb{Z}^m_{p}$, see~\cite{SeberryGLPairs}.  Let $\mathbb{Z}^m_{p}$ index the arrays $\aaa,\bb $, and   
$(A, B)$ be the sets of $1$ indices 
of $(\aaa,\bb)$.
 Then the $\{-1,1\}$ or $\{0,1\}$ $\mathbb{Z}^m_{p}$-array pair $(\aaa,\bb)$ is called  a {\em Paley $\mathbb{Z}^m_{p}$-array pair}.  Both $\aaa$ and $\bb$ are 
 skew-symmetric if $p^m\equiv3$ (mod $4$) and symmetric otherwise. 
 
\noindent \textbf{The Baumert $\mathbb{Z}^{m_1}_{p_1}\times \mathbb{Z}^{m_2}_{p_2}$-array  pairs:}\\
Let $p_1^{m_1}+2= p_2^{m_2}$, where $p_1,p_2$ are odd primes and $m_1,m_2$ are positive integers. Let $q_1=p_1^{m_1}$, $q_2=p_2^{m_2}$,
and
 \begin{align*}
 A=\left(C^{(2,q_1)}_0\times C^{(2,q_2)}_0\right)\bigcup\left(C^{(2,q_1)}_1\times C^{(2,q_2)}_1\right)\bigcup \left(\FF_{q_1}\times \{0\}\right),  \;\; B=A.
 \end{align*}
 Since $A$ is a DS$(q_1(q_1+2),(q_1^2+2q_1-1)/2,(q_1-1)(q_1+3)/4)$  in $\mathbb{Z}^{m_1}_{p_1}\times \mathbb{Z}^{m_2}_{p_2}$~\cite{twin}, the pair $(A,B)$ is a $2$-SDS$(p_1^{m_1}p_2^{m_2};(q_1^2+2q_1-1)/2,(q_1^2+2q_1-1)/2;(q_1-1)(q_1+3)/2)$
in $\mathbb{Z}^{m_1}_{p_1}\times \mathbb{Z}^{m_2}_{p_2}$.
Let $\mathbb{Z}^{m_1}_{p_1}\times \mathbb{Z}^{m_2}_{p_2}$ index the arrays $\aaa, \bb$, and  $(A, B)$ be the sets of $1$ indices
  of $(\aaa,\bb)$. 
 Then the $\{-1,1\}$ or $\{0,1\}$
  $\mathbb{Z}^{m_1}_{p_1}\times \mathbb{Z}^{m_2}_{p_2}$-
 array pair $(\aaa,\bb)$ is called  a {\em Baumert $\mathbb{Z}^{m_1}_{p_1}\times \mathbb{Z}^{m_2}_{p_2}$-array pair}.  
 Both $\aaa$ and $\bb$ are 
 neither symmetric nor skew-symmetric.
\section{Results} \label{sec:results}
\subsection{Yamada-Pott $G$-array pairs}
 Yamada-Pott $G$-array pairs first appeared in~\cite{Yamada} and later in~\cite{PottBook}. A Yamada-Pott $\{0,1\}$ $G$-array pair is a Legendre $\{0,1\}$ $G$-array pair with the added properties that one $G$-array  is symmetric and the other is skew-symmetric. In group ring notation we have the following definition.
\begin{definition}\label{yamadapott}
Let $G$ be a finite abelian group written multiplicatively. A Legendre $\{0,1\}$ $G$-array pair $(\aaa,\bb)$  with 
$A = \PPsia$ and $B = \PPsib$ 
 is a {\em Yamada-Pott  $\{0,1\}$ $G$-array pair} if  $|A|=|B|$  and:
\begin{enumerate}
	\item $A=A^{(-1)}$; \label{pottla}
	\item $B+B^{(-1)}=G+1$ (implying $1\in \{B\}$) or $B+B^{(-1)}=G-1$
	(implying $1 \notin \{B\}$)\label{pottsan}
\end{enumerate}
 are satisfied.
\end{definition}
The following lemma is plain to prove.
\begin{lemma}~\label{isomorphism}
Let  $\Phi:G\rightarrow \Phi(G)$ be an isomorphism. Then, $((a_g),(b_g))$ is a Yamada-Pott  $\{0,1\}$ $G$-array pair if and only if $(\Phi((a_g)),\Phi((b_g)))$ is a Yamada-Pott  $\{0,1\}$ $\Phi(G)$-array pair. 
\end{lemma}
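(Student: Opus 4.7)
The plan is to assemble the result from three pieces that have already been set up in the paper: the preservation of the Legendre property under isomorphism (Lemma~\ref{iso}), the preservation of symmetry and skew-symmetry under isomorphism (Lemma~\ref{lem:symskewiso}), and the obvious fact that a group isomorphism $\Phi$ is a bijection, so $|\Phi(A)|=|A|$ for the corresponding group ring elements. Each of the conditions in Definition~\ref{yamadapott} is one of these three types, so the proof reduces to invoking the right prior result for each.

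First I would fix $A=\PPsia\in\mathbb{Z}[G]$ and $B=\PPsib\in\mathbb{Z}[G]$, extend $\Phi$ linearly to an isomorphism $\mathbb{Z}[G]\to\mathbb{Z}[\Phi(G)]$, and write $A'=\Phi(A)$, $B'=\Phi(B)$. Since $\Phi$ is a bijection on $G$ and maps the support of $A$ bijectively onto the support of $A'$, we immediately get $|A'|=|A|$ and $|B'|=|B|$, so the cardinality condition $|A|=|B|$ transfers.

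Next I would handle the Legendre pair requirement. By Lemma~\ref{iso}, $((a_g),(b_g))$ is a Legendre $\{0,1\}$ $G$-array pair if and only if $(\Phi((a_g)),\Phi((b_g)))$ is a Legendre $\{0,1\}$ $\Phi(G)$-array pair. This takes care of condition underlying Definition~\ref{yamadapott} (the Legendre structure). For conditions~\ref{pottla} and~\ref{pottsan} in Definition~\ref{yamadapott}, I would note that they are exactly the conditions that $\aaa$ is symmetric and $\bb$ is skew-symmetric in the sense of Definition~\ref{def:sym}. By Lemma~\ref{lem:symskewiso}, $\Phi$ sends a symmetric $G$-array to a symmetric $\Phi(G)$-array and a skew-symmetric $G$-array to a skew-symmetric $\Phi(G)$-array, so $A'={A'}^{(-1)}$ and $B'+{B'}^{(-1)}=\Phi(G)\pm \Phi(1)=\Phi(G)\pm 1$, where $1$ here denotes the identity of $\Phi(G)$.

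Each of the three conditions has been shown to be equivalent on the two sides under $\Phi$, so the forward implication is immediate. The reverse implication follows by the same argument applied to $\Phi^{-1}$, which is again an isomorphism. There is really no obstacle here: the lemma is a bookkeeping statement, and the only thing worth being careful about is writing the skew-symmetry condition in the $\Phi(G)$ setting with $\Phi(1)$ as the identity element, which matches Definition~\ref{def:sym} exactly.
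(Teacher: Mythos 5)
Your proof is correct and follows exactly the route the paper intends: the paper dismisses this lemma as ``plain to prove'' and gives no argument, and your assembly of Lemma~\ref{iso} for the Legendre condition, Lemma~\ref{lem:symskewiso} for conditions~\ref{pottla} and~\ref{pottsan}, and bijectivity of $\Phi$ for $|A|=|B|$ (with $\Phi^{-1}$ giving the converse) is precisely the bookkeeping the authors left implicit.
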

 By Lemma~\ref{isomorphism}, whenever we construct a  
 Yamada-Pott  $\{0,1\}$  $G$-array pair,
 we have also constructed a Yamada-Pott  $\{0,1\}$  
 $\Phi(G)$-array pair. 
The following theorem implies that the existence of a Yamada-Pott $\{0,1\}$ 
$\mathbb{Z}_u$-array pair   implies the existence of a perfect  $\{0,1\}$ $\mathbb{Z}_{2u}$-array.

\begin{theorem} \label{ThmYPPairToADS}
Let $H$ be an abelian group  with $|H|=u$ written multiplicatively and $(A, B)$ be a Yamada-Pott $\{0,1\}$ $H$-array pair. Let $S=A+\omega B$ and $G=\langle\omega\rangle H$, where $\omega^2=1$, $\omega\neq1$, and $\omega h=h\omega$ for all $h \in H$. Then, $1\in \{B\}$ implies $\{S\}$ is an ADS$(2u, u+1, (u+1)/2,  (u+3)/2 )$, and  $1\notin \{B\}$ implies $\{S\}$ is an
ADS$\left(2u, u-1, (u-1)/2, (u-3)/2 \right)$ in $G$. 
In either case, $\s$ is an  
almost balanced perfect $\{0,1\}$ $G$-array with 
$G\cong \mathbb{Z}_2\times\Theta(H)\cong \mathbb{Z}_2\times H$, where $\s$ is the $G$-array that corresponds to the group ring element $S$. 
\end{theorem}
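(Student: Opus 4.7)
The plan is to compute $SS^{(-1)}$ directly in the group ring $\mathbb{Z}[G]$ and read off the almost difference set structure from a single identity. This is the natural approach because $G = \langle \omega \rangle H$ decomposes as $H \sqcup \omega H$, so the contributions of $A$ and $\omega B$ to the product land in complementary ``halves'' of the group, making the coefficient-by-coefficient analysis essentially mechanical.

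First I would use the Yamada-Pott hypothesis $A = A^{(-1)}$ together with $\omega^{-1} = \omega$ to write $S^{(-1)} = A + \omega B^{(-1)}$, and then expand
\begin{equation*}
SS^{(-1)} = AA^{(-1)} + BB^{(-1)} + \omega A (B + B^{(-1)}).
\end{equation*}
Next I would substitute the second Yamada-Pott condition $B + B^{(-1)} = H \pm 1$, using $AH = |A|\,H$ to rewrite the $\omega$-term as $|A|\,\omega H \pm \omega A$. The Legendre pair identity from Definition~\ref{DefLegendrePair2} then gives $AA^{(-1)} + BB^{(-1)}$ as a combination of $1$ and $H - 1$; combining everything via the decomposition $G - 1 = (H - 1) + \omega H$ packages the whole expression as a scalar multiple of $G - 1$ plus a single ``excess'' group ring element. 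This excess turns out to be $\omega A$ when $1 \in \{B\}$ and $\omega(H - A)$ when $1 \notin \{B\}$, and in both cases it sits entirely inside $\omega H$.

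From this identity the ADS conclusion is immediate: the coefficient of the identity is $k = |A| + |B|$, every non-identity coefficient is either $\lambda$ or $\lambda + 1$, and the larger value is attained precisely on the support of the excess term, which yields the parameters asserted in the theorem. Since $|H|$ is odd (a Legendre pair forces odd order), $n = 2u \equiv 2\ (\text{mod}\ 4)$, and the two coefficient values equal $k - (n\pm 2)/4$, matching equation~(\ref{perfect01}); combined with $|S| = u \pm 1 = (n \pm 2)/2$ from equation~(\ref{almostbalanced01}), this yields the almost balanced perfect conclusion. The isomorphism $G \cong \mathbb{Z}_2 \times \Theta(H) \cong \mathbb{Z}_2 \times H$ is immediate from the definition of $G$. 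The main obstacle is careful bookkeeping: handling the two cases in parallel without sign errors in the Yamada-Pott relation, and verifying that the excess term is a genuine $\{0,1\}$ group ring sum of distinct non-identity elements disjoint from the coefficients contributed by $(G-1)$ on $H \setminus \{1\}$, so that the displayed identity really has the shape of an ADS group ring equation.
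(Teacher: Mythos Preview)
Your proposal is correct and follows essentially the same route as the paper: expand $SS^{(-1)}$, use $A=A^{(-1)}$ and $B+B^{(-1)}=H\pm 1$ to collapse the $\omega$-coset part to $|A|\,\omega H \pm \omega A$, substitute the Legendre identity for $AA^{(-1)}+BB^{(-1)}$, and read off the two autocorrelation values. The only cosmetic difference is that you invoke $A=A^{(-1)}$ when writing $S^{(-1)}$, whereas the paper applies it one line later to rewrite $BA^{(-1)}$ as $BA$; the resulting identity and the remaining argument are identical.
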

\begin{proof}
Since $|S|=u\pm1$ then $\s$ is almost balanced, and by the definition of a Yamada-Pott $\{0,1\}$ $H$-array pair  
\begin{align*} 
 AA^{(-1)} + BB^{(-1)} =&\,\, (u \pm 1)(1) + \lambda (H-1) \\
 A=&\,\,A^{(-1)}\\
 B+B^{(-1)}=&\,\,H \pm 1,
 \end{align*}
 where 
 $$
 \lambda=
 \begin{cases}
\frac{u+1}{2} \quad \quad\text{if $|A|=|B|= \frac{u+1}{2}$},\\
 \frac{u-3}{2} \quad \quad\text{if $|A|=|B|= \frac{u-1}{2}$}.
 \end{cases}
 $$
Now, 
\begin{align*}
\sum_{t \in G}C_{\s}(t)t=SS^{(-1)} =&\,\, \left( A+\omega B \right) \left( A+\omega B \right)^{(-1)} \\
=& \,\,\left( A+\omega B \right) \left( A^{(-1)}+\omega B^{(-1)} \right) \\
=&\,\, AA^{(-1)} + BB^{(-1)} + \omega \left( AB^{(-1)} + BA^{(-1)} \right) \\
=&\,\, (u\pm1)(1) + \lambda (H-1) + \omega \left( AB^{(-1)} + BA^{(-1)} \right) \\
=&\, \, (u\pm1)(1) + \lambda (H-1) + \omega \left( AB^{(-1)} + BA \right) \\
=&\,\, (u\pm1)(1) + \lambda (H-1) + \omega A \left( B^{(-1)} + B \right) \\
=& \,\,(u\pm1)(1) + \lambda (H-1) + \omega A \left( H \pm1  \right) \\
=&\,\, (u\pm1)(1) + \lambda (H-1) + \omega \left( |A|H \pm A \right) \\
=&\,\, (u\pm1)(1) + \lambda (H-1) + \omega \left( \left( \frac{u\pm1}{2} \right)H \pm A \right) \\
	=&\begin{cases}\frac{u+1}{2}(1) + \lambda G +A\omega \, \hspace{.1cm} \, \, \, \quad \quad \quad \, \text{if $|A|=|B|=\frac{u+1}{2}$,}\\
	\frac{u+1}{2}(1) + \lambda G +(H-A)\omega \quad \, \text{if $|A|=|B|=\frac{u-1}{2}$,}\end{cases}
\end{align*}
where we used the group ring equation $G=H+H\omega$. 
This shows that for $t \neq 1$ the autocorrelation function of $\s$ has the following form 
\begin{equation}\label{eqn:auto}
C_{\s}(t)=
\begin{cases}
\frac{u+1}{2} \text{ or }\frac{u+1}{2}+1\quad \text{if $|A|=|B|=\frac{u+1}{2}$},\\
\frac{u-3}{2} \text{ or }\frac{u-3}{2}+1\quad \text{if $|A|=|B|=\frac{u-1}{2}$}.
\end{cases}
\end{equation}
Thus, by equations~(\ref{perfect01}),~(\ref{almostbalanced01}), 
and~(\ref{eqn:auto}), $\s$ is an almost balanced and perfect $\{0,1\}$ $G$-array. 
\end{proof}

The following are a few remarks concerning Theorem~\ref{ThmYPPairToADS}.
\begin{enumerate}
\item The equation $AB^{(-1)} + BA = A(B^{(-1)} + B)$ in the proof of Theorem~\ref{ThmYPPairToADS}
 is allowed only when
$G$ is abelian. All other steps in the proof would hold for arbitrary finite groups.
	\item The converse to Theorem~\ref{ThmYPPairToADS} is not true. That is, having a balanced and perfect $\{0,1\}$ $G=\mathbb{Z}_2 \times H$-array  
	does not guarantee the existence of a Yamada-Pott 
	$\{0,1\}$ $H$-array pair via reversing the construction in Theorem~\ref{ThmYPPairToADS}. For example, {\scriptsize $$\s=( 1,  1,  0,  0,  1,  1,  1, 1,  0,  0,  1,  0,  1,  1,  0,  0,  0,  0,  0,  0,  1,  0,  1, 0,  1,  1,  1,  0,  1,  0,  0, 1,  0,  1,  1,  1,  0,  0 )^{\top}$$}\noindent is a balanced and perfect $\{0,1\}$ $\mathbb{Z}_{38}$-array obtained by applying the $a_g \rightarrow \frac{a_g+1}{2}$ transformation to the  $\{-1,1\}$ $\mathbb{Z}_{38}$-array in~\cite{ArasuLittle}.  
	Let $S=\sum_{i \in \mathbb{Z}_{38}}s_ii$. 
Now, $\mathbb{Z}_{38} \cong \mathbb{Z}_{2}	\times \mathbb{Z}_{19}$ via the map $\phi(i)  = (i\, \, (\text{mod } 2),\, (i\, \, (\text{mod } 19))$. Let $\hat{S}=\sum_{i \in \mathbb{Z}_{38}}
s_{\phi(i)}\phi(i)=\sum_{i' \in \mathbb{Z}_2\times \mathbb{Z}_{19}}s_{i'}i'$ be the group ring element corresponding to $(\Phi(s_g))$ in $\mathbb{Z}[\mathbb{Z}_2\times \mathbb{Z}_{19}]$. 
 Write $\hat{S}=\hat{A}+\hat{B}$, where $\hat{A}=\sum_{i' \in \{0\}\times \mathbb{Z}_{19}}s_{i'}i'$  and $\hat{B}=\sum_{i' \in \{1\}\times \mathbb{Z}_{19}}s_{i'}i'$. Let $\pi:\mathbb{Z}_2\times \mathbb{Z}_{19} \rightarrow \mathbb{Z}_{19}$ be the projection map $\pi((x,y))=y$. Let $A=\sum_{i' \in \{0\}\times \mathbb{Z}_{19}}s_{\pi(i')}\pi(i')$ and $B=\sum_{i' \in \{1\}\times \mathbb{Z}_{19}}s_{\pi(i')}\pi(i')$. Let  $\aaa, \bb$ be the $\{0,1\}$ $\mathbb{Z}_{19}$-arrays corresponding to $A$ and $B$. Then, 
{\scriptsize $$\aaa=(1, 1, 0, 1, 1, 1, 1, 1,0, 1, 1, 0, 1, 0, 0,
 1, 0, 0, 0)^{\top},
 $$
$$\bb=(0, 1, 0, 0, 0, 1, 1, 1, 0, 0, 0, 0, 1, 1, 1, 0, 1, 0, 0)^{\top},$$}\noindent and 
$(\aaa,\bb)$ fails all of the Yamada-Pott $\{0,1\}$ $\mathbb{Z}_{19}$-array pair conditions, i.e. none of $\aaa$ and $\bb$ is symmetric or skew-symmetric and $|A|\neq |B|$. 
Observe that the isomorphism $\Theta: \mathbb{Z}_2 \times \mathbb{Z}_{19}\rightarrow \langle \omega \rangle H$ maps $(\Phi(s_g))$ to the $\langle \omega \rangle H$-array in Theorem~\ref{ThmYPPairToADS}, where $H$ is a cyclic group of order $19$. Hence, by Lemma~\ref{isomorphism}, reversing the construction in Theorem~\ref{ThmYPPairToADS} 
in this case does not produce a Yamada-Pott $\{0,1\}$ $H$-array pair.
In fact, by an exhaustive computer search, we showed that  no 
Yamada-Pott $\{0,1\}$ $\mathbb{Z}_{19}$-pair exists. Similarly, an exhaustive computer search proved that no
 Yamada-Pott  $\{0,1\}$ $\mathbb{Z}_{17}$-array pair exists. However, a balanced and perfect $\{0,1\}$ $\mathbb{Z}_{34}$-array exists as the Ding-Helleseth-Martinsen class 3 $\mathbb{Z}_2\times \mathbb{Z}_{17}$-array.
	\item There are families of balanced, $\{0,1\}$ $\mathbb{Z}_{2u}$-arrays with perfect autocorrelations that can be used to construct  
	Yamada-Pott $\{0,1\}$ $\mathbb{Z}_{u}$-array pairs or Szekeres $\{0,1\}$ $\mathbb{Z}_{u}$-array pairs, see Theorems~\ref{ThmDHM_s1_SzekeresPair} and~\ref{ThmDHM_t1_SzekeresPair}. 
	
	\item When $|A|=|B|=(u+1)/2$, the smaller (larger) correlation value appears at the elements of $H \cup (H-A)\omega$ ($A\omega$).
		\item When $|A|=|B|=(u-1)/2$, the smaller (larger) correlation value appears at the elements of $H \cup A\omega$ ($(H-A)\omega$).
\end{enumerate}
\begin{theorem}
Replacing $A$ with  $H-A$ or $B$ with $H-B$ in Theorem~\ref{ThmYPPairToADS} does not alter the 
Yamada-Pott $\{0,1\}$ $H$-array pair properties~1 and 2, and yields a perfect and balanced $\{0,1\}$ $\langle\omega\rangle H$-array.
\end{theorem}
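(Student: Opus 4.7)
The plan is to verify three claims in sequence: (i) $H - A$ remains symmetric, (ii) $H - B$ remains skew-symmetric, and (iii) the resulting group ring element $S'$ corresponds to a perfect, balanced $\{0,1\}$ $\langle \omega \rangle H$-array. Both replacement directions ($A \mapsto H - A$ and $B \mapsto H - B$) will be handled in parallel.

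Claims (i) and (ii) reduce to one-line group ring identities. Using $H = H^{(-1)}$ and the symmetry $A = A^{(-1)}$, we have $(H-A)^{(-1)} = H - A$. Using $B + B^{(-1)} = H \pm 1$, we get $(H-B) + (H-B)^{(-1)} = 2H - (H \pm 1) = H \mp 1$, so $H - B$ is still skew-symmetric, with $1 \in \{H-B\}$ iff $1 \notin \{B\}$.

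For (iii), my plan is to mirror the computation of $SS^{(-1)}$ from the proof of Theorem~\ref{ThmYPPairToADS}, substituting $H - A$ for $A$ (or $H - B$ for $B$). Using $AH = |A|H$, $H^2 = uH$, $B + B^{(-1)} = H \pm 1$, together with the Legendre identity for $AA^{(-1)} + BB^{(-1)}$, the expansion of $S' S'^{(-1)}$ collapses to $u(1) + \lambda'(G-1) + \omega C$ for $\lambda' = (u-1)/2$ and $C \in \{A, H-A\}$ (depending on the case), whose nontrivial coefficients lie in $\{(u-1)/2, (u+1)/2\}$ — exactly the perfect autocorrelation values in equation~(\ref{perfect01}) for $|G| = 2u \equiv 2 \pmod{4}$ and $k = u$. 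Balance is immediate from $|S'| = (u \pm 1)/2 + (u \mp 1)/2 = u = |G|/2$.

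The main obstacle is purely bookkeeping: tracking signs across the two sub-cases $|A|=|B|=(u \pm 1)/2$ and the two replacement directions. A conceptually cleaner alternative is to pass to the $\{-1,1\}$ representation, where replacing $B$ by $H-B$ corresponds to negating the $\omega H$-block of the $\{-1,1\}$ array $\tilde{S}$. This preserves $C_{\tilde S}(t)$ for $t \in H$ (each contribution from the $\omega H$-block is doubly negated) and flips the sign of $C_{\tilde S}(t)$ for $t \in \omega H$ (each term mixes exactly one factor from each block); since $\tilde S$ is perfect with $C_{\tilde S}(t) \in \{-2, +2\}$, perfectness is preserved, and the two $\pm 1$ block-sum contributions to $\sum_g \tilde{s}_g$ now cancel to give $\sum_g \tilde{s'}_g = 0$, proving balance without any case analysis.
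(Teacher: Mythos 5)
Your primary plan is essentially the paper's own proof: the same one-line group ring identities $(H-A)^{(-1)}=H-A^{(-1)}=H-A$ and $(H-B)+(H-B)^{(-1)}=2H-(H\pm1)=H\mp1$ for properties 1 and 2, and the same expansion of $S'(S')^{(-1)}$ using $AH=|A|H$, $H^2=uH$, and the Legendre identity; your claimed collapsed form $u(1)+\tfrac{u-1}{2}(G-1)+\omega C$ with $C\in\{A,H-A\}$ agrees with the paper's $\tfrac{u-1}{2}H+\tfrac{u+1}{2}(1)+\omega\bigl(\tfrac{u\pm1}{2}H\mp A\bigr)$, and the balance count $|S'|=u$ is identical. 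Your alternative argument in the last paragraph, however, is genuinely different from anything in the paper and is worth noting: passing to the $\{-1,1\}$ array and observing that replacing $B$ by $H-B$ negates the $\omega H$-block, which fixes $C_{\tilde S}(t)$ for $t\in H$ and negates it for $t\in\omega H$, so perfectness ($C_{\tilde S}(t)=\pm2$ since $2u\equiv2\pmod4$) is preserved for free, while the two block sums $\pm1$ now cancel to give balance. This sidesteps the entire sign bookkeeping that occupies most of the paper's computation, and it makes transparent \emph{why} the complementation works (it is a correlation-preserving sign flip), at the cost of leaning on the already-established conclusion of Theorem~\ref{ThmYPPairToADS} rather than rederiving the autocorrelation spectrum from scratch; you should just state explicitly that the $A\mapsto H-A$ direction corresponds symmetrically to negating the $H$-block.
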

\begin{proof}
	Let $G=\langle\omega\rangle H$ and $(A,B)$ be a
	 Yamada-Pott $\{0,1\}$ $H$-array pair. Let $S'=(H-A+\omega B)$.
Let $\s'$ be the $\{0,1\}$ $G$-array that corresponds to $S'$.	First, $\s'$ is balanced as $$|S'|=|H-A|+|B|=u-\left(\frac{u\pm1}{2}\right)+\frac{u\pm1}{2}=u.$$
Secondly,	 $H-A$ is  symmetric as $$(H-A)^{(-1)}=(H-A^{(-1)})=H-A.$$
Now,
\begin{align*}	
	S'(S')^{(-1)}=&\\ =&\,\,(H-A+\omega B)(H-A+\omega B)^{(-1)}\\=&\,\,(H-A)(H-A)^{(-1)}+BB^{(-1)}+\omega \left(B(H-A)^{(-1)}+
	(H-A)B^{(-1)}\right).
	\end{align*}
	Then,
\begin{align*}
(H-A)(H-A^{(-1)})+BB^{(-1)} =&\,\, HH - HA^{(-1)} - AH + AA^{(-1)} + BB^{(-1)} \\
=& \,\,	|H|H - HA - AH + AA^{(-1)} + BB^{(-1)} \\
=& \,\,	uH - 2|A|H + AA^{(-1)} + BB^{(-1)} \\
=& \,\,	(u - (u\pm1))H + AA^{(-1)} + BB^{(-1)} \\
=& \,\,	\mp H + AA^{(-1)} + BB^{(-1)}\\
=&\,\,\mp H+ (u \pm 1)(1)+\lambda(H-1)\\
=&\,\begin{cases}(\frac{u+1}{2}-1)H+(u-\frac{u+1}{2}+1)(1)  \quad   \text{if $|A|=|B|=\frac{u+1}{2}$,}\\
	(\frac{u-3}{2}+1)H +(u-\frac{u-3}{2}-1)(1) \quad  \text{if $|A|=|B|=\frac{u-1}{2}$}\end{cases}\\
=&\,\, \frac{u-1}{2}H+\frac{u+1}{2}(1),	
\end{align*}
and
\begin{align*}
\omega \left(B(H-A)^{(-1)}+
	(H-A)B^{(-1)}\right) =&\, \,  \omega (B+B^{-1})(H-A)=\omega (H\pm1)(H-A)\\ 
	=&\, \,	\omega \left( \left( \frac{u\pm1}{2} \right)H \mp A \right).
\end{align*}
	By examining $S'S'^{(-1)}=(H-A+\omega B)(H-A+\omega B)^{(-1)}$ we see that for $t \neq 1$ 
 the autocorrelation function of $\s'$ has the following form
\begin{equation}\label{eqn:perfectbalanced}
C_{\s'}(t)=
\frac{u\pm 1}{2}. 
\end{equation}
Thus, by equations~(\ref{perfect01}),~(\ref{balanced01}), 
and~(\ref{eqn:perfectbalanced}) the $G$-array $\s'$ is  perfect.
The case for $$S'=A+(H-B)\omega$$ is proven similarly.	
In this case, the skew-symmetry of $H-B$ follows from
\begin{align*}
(H-B)+(H-B)^{(-1)}
=&\,\, H-B+H-B^{(-1)} \\
=&\,\, 2H-(B+B^{(-1)}) \\
=&\,\, 2H-(H\pm1) \\
=&\,\, H\mp1. 
\end{align*}
\end{proof}
\subsection{The Ding-Helleseth-Martinsen $\{0,1\}$ $\mathbb{Z}_{2}\times \mathbb{Z}_p^{m}$-array based Yamada-Pott $\{0,1\}$ $\mathbb{Z}_p^{m}$-array pairs}\label{open}
 A Yamada-Pott $\{0,1\}$ $\mathbb{Z}_p^{m}$-array pair can be obtained from the array
pair located by Ding-Helleseth-Martinsen $\{0,1\}$  $\mathbb{Z}_2 \times \mathbb{Z}_p^{m}$-array in~\cite{DingHelleseth}
  for two cases, where  $p^m \equiv 5 \; (\text{mod} \;8)$, $p^m=s^2+4t^2$, 
   $s\equiv1$ (mod $4$) and $p$ is a prime. The two cases are  $s=1$ and $t^2=1$. When 
$t^2=1$ we get a Yamada-Pott $\{0,1\}$  $\mathbb{Z}_p^{m}$-array   pair, while in the 
$s=1$ case or  for any $p^m \equiv 5 \; (\text{mod} \;8)$, we get a Szekeres $\{0,1\}$ $\mathbb{Z}_p^{m}$-array  pair. First, we present the case of the Ding-Helleseth-Martinsen family of $s=1$ locating a Szekeres $\{0,1\}$ $\mathbb{Z}_p^{m}$-array pair for all $p^m \equiv 5 \; (\text{mod} \;8)$.

\begin{theorem} \label{ThmDHM_s1_SzekeresPair}
 For each prime power $q=p^m\equiv5 \, \, (\hspace{-.25cm}\mod 8)$ such that $q=s^2+4t^2=1+4t^2$, the 
 Ding-Helleseth-Martinsen $\{0,1\}$  $\mathbb{Z}_{2}\times \mathbb{Z}_p^{m}$-array locates the Szekeres $\{0,1\}$ $\mathbb{Z}_p^{m}
 $-array pair $(\aaa,\bb)$, where the  sets of $1$ indices  
 of $(\aaa, \bb)$ 
 are $$(A,B)=(C^{(4,q,\alpha)}_0 \cup C^{(4,q,\alpha)}_1,C^{(4,q,\alpha)}_0 \cup C^{(4,q,\alpha)}_3).$$
\end{theorem}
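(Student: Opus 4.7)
The plan is to unwind the two constructions from Section~\ref{sec:families} and verify that, under the hypothesis $s=1$ (equivalently $s^2=1$, since $s\equiv 1 \pmod 4$ by convention), the Ding-Helleseth-Martinsen $\{0,1\}$ $\mathbb{Z}_{2}\times \mathbb{Z}_p^{m}$-array literally decomposes into the Szekeres $\{0,1\}$ $\mathbb{Z}_p^{m}$-array pair along the two cosets $\{0\}\times \mathbb{Z}_p^{m}$ and $\{1\}\times \mathbb{Z}_p^{m}$ of $\mathbb{Z}_{2}\times \mathbb{Z}_p^{m}$.

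First I would recall the class $3$ Ding-Helleseth-Martinsen pair in the case $s^2=1$, namely
$$(A_3,B_3) \;=\; (C^{4}_0\cup C^{4}_1,\;C^{4}_0\cup C^{4}_3);$$
the corresponding $\{0,1\}$ $\Theta(\mathbb{Z}_{2}\times \mathbb{Z}_p^{m})$-array has set of $1$-indices $\Theta(A_3)\cup \Theta(B_3)\omega$. Pulling back via $\Theta^{-1}$ to $\mathbb{Z}_{2}\times \mathbb{Z}_p^{m}$ and separating according to the $\mathbb{Z}_2$-coordinate yields a pair $(\aaa,\bb)$ of $\{0,1\}$ $\mathbb{Z}_p^{m}$-arrays whose supports are precisely $A_3$ and $B_3$ respectively.

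Next I would match this against the Szekeres definition recalled in Section~\ref{sec:families}: for any $q=p^m\equiv 5\pmod 8$ the Szekeres pair is given by the $1$-index sets
$$(A,B) \;=\; (C^{4}_0\cup C^{4}_1,\;C^{4}_0\cup C^{4}_3),$$
which are identical to $(A_3,B_3)$ term for term. Hence $(\aaa,\bb)$ is exactly the Szekeres $\{0,1\}$ $\mathbb{Z}_p^{m}$-array pair. The fact that $(A,B)$ forms a $2$-SDS$(p^m;(q-1)/2,(q-1)/2;(q-3)/2)$ --- and therefore, by Theorem~\ref{thm:Legendre} for $f=2$, a Legendre $\mathbb{Z}_p^m$-array pair --- is the classical result of Szekeres~\cite{SzekeresCompDS}, so no further correlation computation is required. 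There is essentially no obstacle here: the entire content is a bookkeeping identification of cyclotomic classes, and the only mild care needed is to ensure that the same primitive root $\alpha$ of $\mathbb{F}_q^*$ is used to index the classes $C^{4}_i$ in both constructions. This is harmless, since by the remark following the definition of $C_i^{(d,q,\alpha)}$ in Section~\ref{sec:introcorrelations}, different choices of $\alpha$ merely permute $C^{4}_1,C^{4}_2,C^{4}_3$ and the two defining pairs transform in the same way.
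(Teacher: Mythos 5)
Your proposal is correct and follows essentially the same route as the paper: both arguments simply observe that for $s^2=1$ the class~$3$ Ding-Helleseth-Martinsen pair $(A_3,B_3)=(C^4_0\cup C^4_1,\,C^4_0\cup C^4_3)$ coincides term for term with the defining $1$-index sets of the Szekeres pair, and then transfer between $\Theta(\mathbb{Z}_p^m)$ and $\mathbb{Z}_p^m$ via the isomorphism lemma. Your added remark about fixing the same primitive root $\alpha$ is a sensible (if implicit in the paper) piece of bookkeeping.
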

\begin{proof}
The fact that the Ding-Helleseth-Martinsen $\{0,1\}$  $\Theta(\mathbb{Z}_{2}\times \mathbb{Z}_p^{m})$-array locates the Szekeres $\{0,1\}$ $\Theta(\mathbb{Z}_p^{m})$-array pair $(\Theta((a_g)),\Theta((b_g)))$, whose sets of $1$ indices are 
$$(\Theta(C^{(4,q,\alpha)}_0 \cup C^{(4,q,\alpha)}_1),
 \Theta(C^{(4,q,\alpha)}_0 \cup C^{(4,q,\alpha)}_3)),$$ follows from the definition of the 
 Ding-Helleseth-Martinsen $\{0,1\}$  $\Theta(\mathbb{Z}_{2}\times \mathbb{Z}_p^{m})$-array for $s=1$. The result now follows from Lemma~\ref{iso}.
\end{proof}

  Next, we show that exactly one of the equivalence classes $1$ and  $2$ Ding-Helleseth-Martinsen family with $t^2=1$ 
  locates a $\{0,1\}$   Yamada-Pott 
 $\mathbb{Z}_p^{m}$-array  pair. 

Let $q=p^m$ for some prime $p$ and $n, D \in \mathbb{Z}$. Then  a representation $nq=x^2+Dy^2$  for some $x,y \in \mathbb{Z}$ is called a {\em proper} if $\gcd(q,x)=1$~\cite[p. 35]{StorerBook}. When $p\equiv 1$ (mod $4$) there are many representations of $q$ in the form $q=s^2+4t^2$ 
for some $s,t \in \mathbb{Z}$. However there is precisely one proper representation~\cite[p. 47]{StorerBook}.

Let $q=p^m=4\ell+1$ for some prime $p$ and odd positive integer $\ell$, or equivalently, let  $p$ be a a prime with $p\equiv 5$ (mod $8$) and $m\in 2\mathbb{Z}_{\geq0}+1$. Then  the unique proper representation of $q$ has the form $q=s^2+4t^2$ with $s \equiv 1$ (mod $4$)
and $t\in \mathbb{Z}$, where the sign of $t$ is 
undetermined~\cite[p. 51]{StorerBook}.  
Let $\alpha$ be a generator of $\FF_q^*$. 
Then, by  Lemma 19 in~\cite[p. 48]{StorerBook}  
\begin{equation}\label{talpha}
t(\alpha)=\frac{16\times(0,3)_{q,\alpha}^4-q-1-2s}{8},
\end{equation} where $t^2=(t(\alpha))^2$.

The integers
$(i,j)_{q, \alpha}^d=|(C_i^{(d, q, \alpha)}+1) \cap C_j^{(d, q, \alpha)}|$
are called the {\em cyclotomic numbers of order $d$} with respect to $\FF_q$
 and $\alpha$ such that $\FF_q^*=\langle \alpha\rangle$.
The following  lemma is needed to establish our results.
\begin{lemma} \label{lemDHM_t1_SzekeresPair}
Let $p$ be a prime, $p\equiv 5$ (mod $8$), $q=p^m$, and
 $m\in 2\mathbb{Z}_{\geq0}+1$. Let $q=s^2+4t^2$ be the unique proper representation of $q$. Let  
 \begin{eqnarray*}
(A_1,B_1)&=&(C^{(4,q,\alpha)}_0 \cup C^{(4,q, \alpha)}_1,C^{(4,q, \alpha)}_1 \cup C^{(4,q, \alpha)}_3),\\ (A_2,B_2)&=&(C^{(4,q, \alpha)}_0 \cup C^{(4,q, \alpha)}_2,  C^{(4,q, \alpha)}_2\cup C^{(4,q, \alpha)}_3),
\end{eqnarray*}
 where $\FF_q^*=\langle \alpha\rangle$ and $t(\alpha)$ is as in equation~(\ref{talpha}). Then  
   {\footnotesize
     \begin{equation} \label{Equation4Ding}
	|A_1\cap(A_1+x)|+|B_1\cap(B_1+x)|= \begin{cases}
	A+4E+2B+D=	\frac{q-t(\alpha)-2}{2}\quad     $ if $x^{-1} \in
	 C_{0}^{(4,q,\alpha)}\cup C_{2}^{(4,q,\alpha)},\\
	4A+2E+C+D=	\frac{q+t(\alpha)-4}{2} \quad $ if $x^{-1} \in 
	 C_{1}^{(4,q,\alpha)}\cup C_{3}^{(4,q,\alpha)},
	\end{cases}
\end{equation}}
and 
  {\footnotesize
     \begin{equation} \label{Equation5Ding}
	|A_2\cap(A_2+x)|+|B_2\cap(B_2+x)|= \begin{cases}
	4A+2E+B+C=	\frac{q-t(\alpha)-4}{2}\quad     $ if $x^{-1} 
	\in C_{0}^{(4,q,\alpha)}\cup C_{2}^{(4,q,\alpha)},\\
	4E+2D+A+B=	\frac{q+t(\alpha)-2}{2} \quad $ if $x^{-1} 
	\in  C_{1}^{(4,q,\alpha)}\cup C_{3}^{(4,q,\alpha)},
	\end{cases}
\end{equation}}
where
 {\footnotesize
    \begin{align*}
A=&\,\, \frac{q-7+2s}{16}, \\
B=&\,\,  \frac{q+1+2s-8t(\alpha)}{16}, \\
C=&\,\,  \frac{q+1-6s}{16}, \\
D=&\,\, \frac{q+1+2s+8t(\alpha)}{16},\\
E=& \,\, \frac{q-3-2s}{16}.
\end{align*}}
\end{lemma}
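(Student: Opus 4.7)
The plan is to reduce each of the two sums on the left--hand side to a sum of cyclotomic numbers of order $4$ over $\FF_q$, and then to invoke the classical evaluations of those numbers for $q\equiv 5\pmod 8$ due to Storer. First I would establish the basic counting identity: for $x\in C_k^{(4,q,\alpha)}$ with $x\neq 0$ and any $i,j\in\{0,1,2,3\}$,
\[
\bigl|C_i^{(4,q,\alpha)}\cap \bigl(C_j^{(4,q,\alpha)}+x\bigr)\bigr| \;=\; (j-k,\,i-k)_{q,\alpha}^{4},
\]
with indices read modulo $4$. This is immediate from the change of variable $y=xz$: the condition $y\in C_i$ becomes $z\in C_{i-k}$, and $y-x=x(z-1)\in C_j$ becomes $z-1\in C_{j-k}$, which is precisely what the number $(j-k,i-k)^{4}$ counts. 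Since each $C_\ell^{(4,q,\alpha)}$ is a subset of $\FF_q^{*}$ and the cyclotomic classes are disjoint, expanding $A_s$ and $B_s$ as disjoint unions of two classes lets me write each of the four sums $|A_s\cap(A_s+x)|+|B_s\cap(B_s+x)|$ as a sum of eight cyclotomic numbers.

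Next, because $q\equiv 5\pmod 8$ gives $(q-1)/2\equiv 2\pmod 4$, we have $-1\in C_2^{(4,q,\alpha)}$, and hence the standard symmetry $(i,j)_{q,\alpha}^{4}=(j+2,i+2)_{q,\alpha}^{4}$ holds. Applying this symmetry collapses the four cases $k\in\{0,1,2,3\}$ into two, which matches the dichotomy in the statement, since inversion maps $C_k$ to $C_{-k}$ and $-k$ has the same parity as $k$ modulo $4$; hence the answer depends only on whether $x^{-1}\in C_0\cup C_2$ or $x^{-1}\in C_1\cup C_3$. For each of the two parities I then substitute Storer's explicit values of $(i,j)^{4}$ in terms of $A,B,C,D,E$ as defined in the lemma statement; the normalization $t=t(\alpha)$ from equation~(\ref{talpha}) is precisely the one that fixes the sign of $t$ in Storer's table and thereby pins down which entry receives value $B$ versus $D$ (and $A$ versus $E$).

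Finally, each stated closed form reduces by direct arithmetic. For instance,
\begin{align*}
A+4E+2B+D
&=\tfrac{1}{16}\bigl[(q-7+2s)+4(q-3-2s)+2(q+1+2s-8t)+(q+1+2s+8t)\bigr]\\
&=\tfrac{1}{16}(8q-16-8t)\;=\;\tfrac{q-t-2}{2},
\end{align*}
and the three remaining combinations in (\ref{Equation4Ding})--(\ref{Equation5Ding}) simplify by exactly the same mechanism; the $s$--dependence cancels in every case, and $t$ survives only through the differences $D-B$ or $B-D$, which explains why the closed forms involve $\pm t$ but no $s$.

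The main obstacle is purely bookkeeping: one has to correctly match each of the eight cyclotomic numbers appearing in each of the four sums to the right element of $\{A,B,C,D,E\}$, being careful with Storer's labeling conventions and with the sign of $t$ fixed by (\ref{talpha}). Once these assignments are made consistently, the four identities in (\ref{Equation4Ding}) and (\ref{Equation5Ding}) follow by substitution and elementary arithmetic; there is no conceptual difficulty beyond the tabulation itself.
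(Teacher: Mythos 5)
Your proposal is correct, and it is worth noting that the paper does not actually prove this lemma at all: its ``proof'' consists of citing the proof of Theorem~3.1 in Ding (2008) and correcting two typos in equation~(5) there. What you have written is a self-contained reconstruction of the argument that citation outsources, and it is the standard (indeed essentially the only) way to do it: the change of variables $y=xz$ giving $|C_i\cap(C_j+x)|=(j-k,i-k)^4_{q,\alpha}$ for $x\in C_k$ is correct under the paper's convention $(i,j)^d=|(C_i+1)\cap C_j|$; the observation that $q\equiv 5\pmod 8$ forces $-1\in C_2^{(4)}$, hence $(i,j)^4=(j+2,i+2)^4$, does collapse the four residues of $k$ into the two stated parity classes (and inversion preserves those classes since $-k\equiv k\pmod 2$); and Storer's order-$4$ table with $t$ normalized by $16(0,3)^4=q+1+2s+8t(\alpha)$ --- which is exactly the paper's equation for $t(\alpha)$ --- yields the assignments $A=(0,0)$, $B=(0,1)$, $C=(0,2)$, $D=(0,3)$, $E$ for the remaining entries. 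I checked a representative case: for $x\in C_0$ one gets $|A_1\cap(A_1+x)|=A+B+2E$ and $|B_1\cap(B_1+x)|=B+D+2E$, summing to $A+4E+2B+D$, and your closed-form arithmetic (e.g.\ $A+4E+2B+D=(8q-16-8t)/16=(q-t-2)/2$) is right in all four cases, with the $s$-terms cancelling as you say. The only thing left implicit is the full tabulation of the remaining three sums, which you correctly flag as bookkeeping; carried out, it reproduces the lemma, including the corrected versions of Ding's two misprinted values. In short, your route supplies the proof the paper omits, at the cost of having to trust (or re-derive) Storer's cyclotomic table, which the paper instead inherits wholesale from the reference.
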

\begin{proof}
This result is proven in the proof of Theorem 3.1	in~\cite{Ding2008}.
(There are two typos in 
	 equation~(5) in~\cite{Ding2008}; ``$\frac{q-2-t}{2}$"
	 and ``$\frac{q-4+t}{2}$" should be ``$\frac{q-4-t}{2}$"
	 and ``$\frac{q-2+t}{2}$" respectively. Equation~(5) in~\cite{Ding2008} is  equation~(\ref{Equation5Ding}) here.)
\end{proof}
\begin{theorem} \label{ThmDHM_t1_SzekeresPair}
For $i=1,2$, let $(\aaa_i,\bb_i)$ be $\{0,1\}$ $\mathbb{Z}_p^{m}$-pair whose sets of $1$ indices are $(A_i,B_i)$ in Lemma~\ref{lemDHM_t1_SzekeresPair}.
Then $(\aaa_i,\bb_i)$ is a Yamada-Pott $\{0,1\}$ $\mathbb{Z}_p^{m}$-pair if and only if $t(\alpha)=(-1)^{i+1}$, where $t(\alpha)$ is as in equation~(\ref{talpha}). 
 Hence, exactly one of	the $(\aaa_i,\bb_i)$ is 
 a Yamada-Pott $\mathbb{Z}_p^{m}$-array pair. 
\end{theorem}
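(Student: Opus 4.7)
The plan is to combine Lemma~\ref{lemDHM_t1_SzekeresPair} with a direct cyclotomic calculation to verify simultaneously the Legendre pair condition of Definition~\ref{DefLegendrePair2} and the symmetric/skew-symmetric conditions of Definition~\ref{yamadapott}. The hypotheses $p\equiv 5\,(\mathrm{mod}\,8)$, $m$ odd, and $t^2=1$ remain in force throughout, so $q=p^m\equiv 5\,(\mathrm{mod}\,8)$ and $t(\alpha)\in\{-1,+1\}$.

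First I would observe that each cyclotomic class $C_j^{(4,q,\alpha)}$ has cardinality $(q-1)/4$, so $|A_i|=|B_i|=(q-1)/2$ for $i=1,2$. This places us in the second case of Definition~\ref{DefLegendrePair2}: the pair $(\aaa_i,\bb_i)$ is a Legendre $\mathbb{Z}_p^m$-array pair if and only if the coefficient of every non-identity element of $\mathbb{Z}_p^m$ in $A_iA_i^{(-1)}+B_iB_i^{(-1)}$ equals $(q-3)/2$. The coefficient of $x\ne 0$ in this group ring product equals $|A_i\cap(A_i+x)|+|B_i\cap(B_i+x)|$, which is precisely the quantity computed in equations~(\ref{Equation4Ding}) and~(\ref{Equation5Ding}) of Lemma~\ref{lemDHM_t1_SzekeresPair}. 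Setting $\frac{q-t(\alpha)-2}{2}=\frac{q+t(\alpha)-4}{2}=\frac{q-3}{2}$ for $i=1$ forces $t(\alpha)=1$; setting $\frac{q-t(\alpha)-4}{2}=\frac{q+t(\alpha)-2}{2}=\frac{q-3}{2}$ for $i=2$ forces $t(\alpha)=-1$. Since both residue classes $C_0^4\cup C_2^4$ and $C_1^4\cup C_3^4$ are non-empty, these are necessary as well as sufficient, so $(\aaa_i,\bb_i)$ is a Legendre pair iff $t(\alpha)=(-1)^{i+1}$.

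Next I would verify the symmetry/skew-symmetry structure, which is independent of $t(\alpha)$. Because $q\equiv 5\,(\mathrm{mod}\,8)$, we have $(q-1)/2\equiv 2\,(\mathrm{mod}\,4)$, so $-1=\alpha^{(q-1)/2}\in C_2^{(4,q,\alpha)}$, and hence $-C_j^{(4,q,\alpha)}=C_{j+2}^{(4,q,\alpha)}$ with indices modulo $4$. Applying this to each member:
\begin{align*}
-A_1 &= C_2^4\cup C_3^4, &\text{so } A_1\cup(-A_1)&=\mathbb{F}_q^*, \; A_1\cap(-A_1)=\emptyset \;(\text{skew-symmetric}), \\
-B_1 &= C_3^4\cup C_1^4=B_1 &&(\text{symmetric}), \\
-A_2 &= C_2^4\cup C_0^4=A_2 &&(\text{symmetric}), \\
-B_2 &= C_0^4\cup C_1^4, &\text{so } B_2\cup(-B_2)&=\mathbb{F}_q^*, \; B_2\cap(-B_2)=\emptyset \;(\text{skew-symmetric}).
\end{align*}
Noting that $0\notin A_i\cup B_i$ (since $0\notin\mathbb{F}_q^*$), Lemma~\ref{lem:symskewG} applied in additive notation gives exactly the skew-symmetry condition $M+M^{(-1)}=G-0$ of Definition~\ref{def:sym}. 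In each case the pair $(A_i,B_i)$ consists of one symmetric and one skew-symmetric element, which, up to swapping the order of the pair (permitted by the symmetry of Definition~\ref{yamadapott} in its two arrays), matches conditions~\ref{pottla} and~\ref{pottsan} of that definition.

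Combining both steps: the Yamada-Pott conditions of Definition~\ref{yamadapott}, namely $|A_i|=|B_i|$ (automatic), the symmetric/skew-symmetric structure (automatic), and the underlying Legendre pair identity, hold if and only if $t(\alpha)=(-1)^{i+1}$. Since $t^2=1$ forces $t(\alpha)\in\{-1,+1\}$, exactly one of $i=1,2$ satisfies this equality, so exactly one of $(\aaa_i,\bb_i)$ is a Yamada-Pott $\mathbb{Z}_p^m$-array pair. The only non-routine ingredient is the cyclotomic number evaluation already encapsulated in Lemma~\ref{lemDHM_t1_SzekeresPair}; the rest is bookkeeping between the $\pm$ conventions in the two cases of Definition~\ref{DefLegendrePair2} and the action of $-1$ on the index-$4$ cyclotomic classes.
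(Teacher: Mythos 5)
Your proposal is correct and follows essentially the same route as the paper's proof: it extracts the Legendre/SDS condition from the difference-function values in Lemma~\ref{lemDHM_t1_SzekeresPair}, forcing $t(\alpha)=(-1)^{i+1}$, and derives the symmetric/skew-symmetric structure from $-1=\alpha^{(q-1)/2}\in C_2^{(4,q,\alpha)}$ so that $-C_j^4=C_{j+2}^4$. Your explicit handling of the ordering of the pair (which array is symmetric versus skew-symmetric) is a small point the paper glosses over, but the substance is identical.
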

\begin{proof}
First, $(A_1,B_1)$ is  a $2$-SDS$(q; (q-1)/2, (q-1)/2, (q-3)/2)$ if and only if 
$$\frac{q-t(\alpha)-2}{2}=\frac{q+t(\alpha)-4}{2}=\frac{q-3}{2}\iff t(\alpha)=1,$$ and
$(A_2,B_2)$ is a $2$-SDS$(q; (q-1)/2, (q-1)/2, (q-3)/2)$ if and only if 
$$\frac{q-t(\alpha)-4}{2}=\frac{q+t(\alpha)-2}{2}=\frac{q-3}{2} \iff t(\alpha)=-1.$$
Hence,	 the choice of field generator $\alpha$ determines which pair is the supplementary difference set as $(0,3)_{q,\alpha}^4$ is a function 
of $\alpha$. To prove the symmetry of $B_1$ and the skew-symmetry of $A_1$ first observe that $q=s^2+4$, with $s\equiv1$ (mod $4$) implies $q=8j+5$ for some $j \in \mathbb{Z}^{\geq 0}$. Since $$
-1=\alpha^\frac{q-1}{2}=\alpha^{4j+2},$$ we have
$$-C^{(4,q, \alpha)}_1 = \alpha^{4j+2}\alpha C^{(4,q, \alpha)}_0=\alpha^3C^{(4,q, \alpha)}_0=
C^{(4,q, \alpha)}_3,$$	  and
	   $$-C^{(4,q, \alpha)}_0=
	   \alpha^{4j+2}C^{(4,q, \alpha)}_0=C^{(4,q, \alpha)}_2.$$ Then
 $$B_1^{(-1)} = 
	  (C^{(4,q, \alpha)}_1 + C^{(4,q, \alpha)}_3)^{(-1)} =-C^{(4,q, \alpha)}_1 - C^{(4,q, \alpha)}_3 =
	   C^{(4,q, \alpha)}_3+C^{(4,q, \alpha)}_1=B_1,$$ and $B_1$ is symmetric. Moreover, 
	   \begin{equation}\label{eqn:A_1}
	   A_1^{(-1)}=(C^{(4,q, \alpha)}_0+C^{(4,q, \alpha)}_1)^{(-1)}=-C^{(4,q, \alpha)}_0-C^{(4,q, \alpha)}_1=
	   C^{(4,q, \alpha)}_2+C^{(4,q, \alpha)}_3.
	   \end{equation}  Now, equation~(\ref{eqn:A_1}) implies $\{A_1\} \cap \{A_1^{(-1)}\}=\emptyset,$
	 and    $A_1+A_1^{(-1)}=\mathbb{Z}_p^m-0$.   Hence, $A_1$ is skew-symmetric.
The symmetry of $A_2$ and the skew-symmetry of $B_2$ are proven similarly. The result now follows from Theorem~\ref{thm:Legendre}.
\end{proof}
Let $C_{i,j,l}$ in Section~\ref{sec:families} be the sets of $1$ indices of pairs of $\mathbb{Z}_p^m$-arrays. It is easy to check that the equivalence classes of pairs of $\mathbb{Z}_p^m$-arrays whose sets of $1$ indices are $C_{0,1,3} , C_{0,2,3}$, and $C_{1,0,3}$ constitute all equivalence classes of all possible  pairs of $\mathbb{Z}_p^m$-arrays
whose sets of $1$ indices have the form  $C_{i,j,l}$.
Hence, Theorems~\ref{ThmDHM_s1_SzekeresPair} and~\ref{ThmDHM_t1_SzekeresPair} cover all equivalence classes of all possible such  $\mathbb{Z}_p^m$-arrays.

The following corollary provides two equivalent conditions to the equivalence class $i$ 
Ding-Helleseth-Martinsen family of $\Theta(\mathbb{Z}_{2}\times \mathbb{Z}_p^{m})$-array $\s_i$ being perfect.
\begin{corollary}\label{corollary}
Let $(\aaa_i,\bb_i)$ and $t(\alpha)$ be as in Theorem~\ref{ThmDHM_t1_SzekeresPair}. Let $\Theta(\mathbb{Z}_2)=\langle\omega\rangle$, and $S_i=\Theta(A_i)+\Theta(B_i)\omega$ be the set of $1$ indices of
the equivalence class $i$ 
Ding-Helleseth-Martinsen family of 
$\{0,1\}$ $\Theta(\mathbb{Z}_{2}\times \mathbb{Z}_p^{m})$-array $\s_i$. 
Then the following are equivalent:
\begin{itemize}
\item[(i)]
  The $\{0,1\}$ $\Theta(\mathbb{Z}_{2}\times \mathbb{Z}_p^{m})$-array $\s_i$  is perfect.
\item[(ii)]
 $(\aaa_i, \bb_i)$ is a Yamada-Pott $\{0,1\}$ $\mathbb{Z}_p^{m}$-array pair.
\item[(iii)]
$t(\alpha)=(-1)^{i+1}$.
\end{itemize}
\end{corollary}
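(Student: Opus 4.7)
The plan is to prove the corollary by establishing the cycle of implications $(iii) \Rightarrow (ii) \Rightarrow (i) \Rightarrow (iii)$. The equivalence $(ii) \Leftrightarrow (iii)$ is already the content of Theorem~\ref{ThmDHM_t1_SzekeresPair}, so the $(iii) \Rightarrow (ii)$ half of the cycle comes for free.

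For $(ii) \Rightarrow (i)$, I will invoke Theorem~\ref{ThmYPPairToADS} with $H = \Theta(\mathbb{Z}_p^m)$ (written multiplicatively) and with the generator of $\Theta(\mathbb{Z}_2)$ playing the role of $\omega$. The group-ring element $A + \omega B$ constructed there has support $\Theta(A_i) \cup \Theta(B_i)\omega = \{S_i\}$, so the associated $\{0,1\}$ $\Theta(\mathbb{Z}_2 \times \mathbb{Z}_p^m)$-array is exactly $\s_i$. The theorem then guarantees $\s_i$ is almost balanced and perfect, yielding (i).

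The main step is $(i) \Rightarrow (iii)$. I will expand $S_i S_i^{(-1)}$ in the group ring along the same lines as the proof of Theorem~\ref{ThmYPPairToADS}. Restricted to the identity coset, the coefficient of any $t \in \Theta(\mathbb{Z}_p^m) \setminus \{e\}$ is
$$|A_i \cap (A_i + t)| + |B_i \cap (B_i + t)|,$$
and Lemma~\ref{lemDHM_t1_SzekeresPair} expresses this in closed form depending on which pair of cyclotomic classes $t^{-1}$ lies in. Since $|S_i| = q-1$ and $|G| = 2q \equiv 2 \pmod{4}$, equation~(\ref{perfect01}) forces each such autocorrelation to lie in $\{(q-3)/2,\,(q-1)/2\}$. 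For $i = 1$, equation~(\ref{Equation4Ding}) supplies the two values $(q - t(\alpha) - 2)/2$ and $(q + t(\alpha) - 4)/2$; using that $t(\alpha) \in \{-1,1\}$ (from the standing hypothesis $t^2 = 1$), only $t(\alpha) = 1$ simultaneously places both values in the admissible set. An analogous check with equation~(\ref{Equation5Ding}) forces $t(\alpha) = -1$ when $i = 2$. In both cases $t(\alpha) = (-1)^{i+1}$, closing the cycle.

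The main obstacle is the autocorrelation bookkeeping: a priori one might worry that the ``$\omega$-coset'' coefficients of $S_i S_i^{(-1)}$ (coefficients of $\omega t$) impose further constraints that could break the equivalence. This concern dissolves automatically, since the already-established $(ii) \Rightarrow (i)$ direction guarantees that once $t(\alpha) = (-1)^{i+1}$ holds, the perfectness condition is satisfied at every nonidentity position, so no separate verification on the $\omega$-coset is required.
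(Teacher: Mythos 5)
Your proposal is correct and follows essentially the same route as the paper: both use Theorem~\ref{ThmDHM_t1_SzekeresPair} for $(ii)\Leftrightarrow(iii)$, Theorem~\ref{ThmYPPairToADS} for $(ii)\Rightarrow(i)$, and close the loop by expanding $S_iS_i^{(-1)}$, reading off the identity-coset coefficients via Lemma~\ref{lemDHM_t1_SzekeresPair}, and observing that perfectness forces them into $\{(q-3)/2,(q-1)/2\}$, which rules out the $(q-5)/2$ value unless $t(\alpha)=(-1)^{i+1}$. The only cosmetic difference is that the paper phrases the last step as $(i)\Rightarrow(ii)$ by contradiction rather than your $(i)\Rightarrow(iii)$, which is immaterial given the established equivalence of $(ii)$ and $(iii)$.
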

\begin{proof}
The equivalence of $(ii)$ and $(iii)$ follows from Theorem~\ref{ThmDHM_t1_SzekeresPair}.
$(ii) \implies (i)$ follows from Theorem~\ref{ThmYPPairToADS}.
To prove $(i) \implies (ii)$, we already proved in the proof of Theorem 
~\ref{ThmDHM_t1_SzekeresPair}  that $\aaa_1$ and $\bb_2$ are skew-symmetric and 
$\bb_1$ and $\aaa_2$ are symmetric. 
So, it suffices to show that $(\Theta(\aaa_i),\Theta(\bb_i))$ is a Legendre pair. By the definition in equation~(\ref{perfect01}), $\s_i$ is perfect implies 
\begin{equation}\label{perfectcs}
C_{\s_i}(t)=\frac{q-1}{2} \quad \text{or} \quad \frac{q-3}{2}\quad \text{if}\quad t\neq 1.
\end{equation}
Now, 
\begin{align*}
\sum_{t \in \Theta(\mathbb{Z}_{2}\times \mathbb{Z}_p^{m})}C_{\s_i}(t)t=S_iS_i^{(-1)} =&\,\, \left( \Theta(A_i)+\omega\Theta(B_i) \right) \left( \Theta(A_i)+\omega\Theta(B_i) \right)^{(-1)} \\
=&\,\,\left( \Theta(A_i)+\omega\Theta(B_i) \right) \left( \Theta(A_i)^{(-1)}+\omega\Theta(B_i)^{(-1)} \right). 
\end{align*}
Then, 
{\small
\begin{equation}\label{eqn:SiSi}
S_iS_i^{-1} = \Theta(A_i)\Theta(A_i)^{(-1)} + \Theta(B_i)\Theta(B_i)^{(-1)} +  \omega\left(\Theta(A_i)\Theta(B_i)^{(-1)} + \Theta(A_i)^{(-1)}\Theta(B_i)  \right). 
\end{equation}}The isomorphism $\Theta:\mathbb{Z}_2\times\mathbb{Z}_p^m\rightarrow 
\Theta(\mathbb{Z}_2\times\mathbb{Z}_p^m)$ extends linearly to an isomorphism of $\mathbb{Z}[\mathbb{Z}_2\times\mathbb{Z}_p^m]$ and
$\mathbb{Z}[\Theta(\mathbb{Z}_2\times\mathbb{Z}_p^m)]$.
If $(\aaa_i,\bb_i)$ is not a Legendre pair then by equations (\ref{Equation4Ding}) and (\ref{Equation5Ding})   $$A_i(A_i)^{(-1)} + B_i(B_i)^{(-1)}$$ has terms whose coefficients are equal to $(q-5)/2$. Then   equation~(\ref{eqn:SiSi}) implies
$$\Theta(A_i)\Theta(A_i)^{(-1)} + \Theta(B_i)\Theta(B_i)^{(-1)}$$ has terms whose coefficients are  $(q-5)/2$,
and this contradicts equation~(\ref{perfectcs}). 
\end{proof}

By establishing $(i) \iff (iii)$ in Corollary~\ref{corollary} we also solved the second  of the proposed two open problems at the end of Section 3 in~\cite{Ding2008}. As far as we know this problem has been open until now.

The second part of Theorem 3.1 of~\cite{ArasuDicyclic}  states  that the 
  $\mathbb{Z}_p^{m}$-array pair $(\aaa,\bb)$ with sets of $1$ indices $(C^{(4,q,\alpha)}_0 \cup C^{(4,q,\alpha)}_1, C^{(4,q,\alpha)}_0 \cup  C^{(4,q,\alpha)}_2 )$ satisfies the Legendre $\mathbb{Z}_p^{m}$-array pair condition.
  This is not always true. On page 130 of~\cite{PottBook}, Pott incorrectly credits~\cite{ArasuDicyclic} for this theorem (as it works if and only if $t(\alpha)=-1$). Nevertheless, this does not impact the main theme of~\cite{ArasuDicyclic} on dicyclic designs.
  The following corollary corrects the second part of Theorem 3.1 of~\cite{ArasuDicyclic}.
  \begin{corollary}
Let $t(\alpha)$  and $q=p^m=s^2+4t(\alpha)^2$ be as in equation~(\ref{talpha}). Then, the  $\mathbb{Z}_p^{m}$-array pair $(\aaa,\bb)$ with sets of $1$ indices $D_1=C^{(4,q,\alpha)}_0 \cup C^{(4,q,\alpha)}_1$, and $D_2= C^{(4,q,\alpha)}_0 \cup  C^{(4,q,\alpha)}_2 $ satisfies the Legendre $\mathbb{Z}_p^{m}$-array pair condition if and only if $t(\alpha)=-1$. Moreover, $\aaa$ is  skew-symmetric and $\bb$ is symmetric.
  \end{corollary}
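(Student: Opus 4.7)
The plan is to reduce this corollary directly to Lemma~\ref{lemDHM_t1_SzekeresPair}, via an algebraic identification of the relevant group ring elements. The Legendre condition for $(\aaa,\bb)$ is the group ring equation
$$D_1 D_1^{(-1)}+D_2D_2^{(-1)}=\left(|D_1|+|D_2|\right)(0)+\tfrac{q-3}{2}\bigl(\mathbb{Z}_p^m-0\bigr),$$
so I need to compute $|D_1\cap(D_1+x)|+|D_2\cap(D_2+x)|$ for each nonzero $x$ and show that equating both possible values with $(q-3)/2$ is equivalent to $t(\alpha)=-1$.

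First I would note that $D_1=A_1$ and $D_2=A_2$ in the notation of Lemma~\ref{lemDHM_t1_SzekeresPair}. The symmetry of $D_2=C_0^{(4,q,\alpha)}\cup C_2^{(4,q,\alpha)}$ and the skew-symmetry of $D_1=C_0^{(4,q,\alpha)}\cup C_1^{(4,q,\alpha)}$ have already been established inside the proof of Theorem~\ref{ThmDHM_t1_SzekeresPair} (using $q\equiv 5\pmod{8}$, so $-C_0^{(4,q,\alpha)}=C_2^{(4,q,\alpha)}$ and $-C_1^{(4,q,\alpha)}=C_3^{(4,q,\alpha)}$), so nothing new needs to be done for the moreover part.

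The key observation is then that although Lemma~\ref{lemDHM_t1_SzekeresPair} does not directly give $A_1A_1^{(-1)}+A_2A_2^{(-1)}$, the pair $(A_2,B_2)$ in equation~(\ref{Equation5Ding}) satisfies $B_2=C_2^{(4,q,\alpha)}\cup C_3^{(4,q,\alpha)}=-\bigl(C_0^{(4,q,\alpha)}\cup C_1^{(4,q,\alpha)}\bigr)=-A_1=A_1^{(-1)}$. Since $\mathbb{Z}_p^m$ is abelian, this gives the group ring identity
$$B_2B_2^{(-1)}=A_1^{(-1)}A_1=A_1A_1^{(-1)}=D_1D_1^{(-1)},$$
so $D_1D_1^{(-1)}+D_2D_2^{(-1)}=A_2A_2^{(-1)}+B_2B_2^{(-1)}$ and the right-hand side is precisely what equation~(\ref{Equation5Ding}) evaluates.

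Finally, reading off equation~(\ref{Equation5Ding}), the two possible values of $|D_1\cap(D_1+x)|+|D_2\cap(D_2+x)|$ are $(q-t(\alpha)-4)/2$ when $x^{-1}\in C_0^{(4,q,\alpha)}\cup C_2^{(4,q,\alpha)}$ and $(q+t(\alpha)-2)/2$ when $x^{-1}\in C_1^{(4,q,\alpha)}\cup C_3^{(4,q,\alpha)}$. Both equal $(q-3)/2$ if and only if $t(\alpha)=-1$, which finishes the equivalence. I do not anticipate a serious obstacle here: the whole argument is a short bookkeeping reduction to the already-proved cyclotomic count, and the only point that needs care is the identification $B_2=A_1^{(-1)}$ that unlocks the use of equation~(\ref{Equation5Ding}).
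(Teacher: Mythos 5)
Your proof is correct. It is essentially the same reduction as the paper's, but the mechanism differs slightly and is worth noting. The paper observes that $(D_1,D_2)=(\alpha^2B_2,\alpha^2A_2)$, i.e.\ that the pair is the image of $(B_2,A_2)$ under the equivalence action of multiplication by the unit $\alpha^2$, and then invokes Theorem~\ref{ThmDHM_t1_SzekeresPair} together with the fact that equivalence preserves the Legendre property (and, via Lemma~\ref{lem:symskew}, symmetry/skew-symmetry). You instead note $D_1=A_1$, $D_2=A_2$, and $B_2=-A_1=A_1^{(-1)}$, so that $B_2B_2^{(-1)}=D_1D_1^{(-1)}$ in the (commutative) group ring, which lets you read the answer straight off equation~(\ref{Equation5Ding}) without passing through the theorem or the equivalence machinery. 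The two identifications coincide because $-1\in C_2^{(4,q,\alpha)}$ when $q\equiv 5\pmod 8$, so negation and multiplication by $\alpha^2$ act identically on cyclotomic classes of order $4$; your route is marginally more self-contained, while the paper's makes the equivalence-class structure explicit. Your handling of the ``moreover'' clause (citing the skew-symmetry of $A_1$ and symmetry of $A_2$ already proved inside Theorem~\ref{ThmDHM_t1_SzekeresPair}) is also valid and a bit more direct than the paper's appeal to Lemma~\ref{lem:symskew}.
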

  \begin{proof}
  Let $A_2$, $B_2$ and $\aaa_2$, $\bb_2$ be as in 
  Theorem~\ref{ThmDHM_t1_SzekeresPair}. Then $\aaa_2$ 
  is symmetric and $\bb_2$ is skew-symmetric.
   Observe that $(D_1,D_2)=(\alpha^2B_2,\alpha^2A_2)$.
  Hence, $\mathbb{Z}_p^m$-array pair $(\bb_2,\aaa_2)$ is equivalent to $(\aaa,\bb)$. Thus, $(\aaa,\bb)$ is a Legendre  $\mathbb{Z}_p^{m}$-array pair if and only if $t(\alpha)=-1$.
  By Lemma~\ref{lem:symskew}, $(D_1,D_2)=(\alpha^2B_2,\alpha^2A_2)$ implies $\aaa$ is  skew-symmetric and $\bb$ is symmetric.
   \end{proof}  

\subsection{The Sidelnikov-Lempel-Cohn-Eastman $\mathbb{Z}_{q-1}$-array based\\ Yamada-Pott $\{0,1\}$ $\mathbb{Z}_{(q-1)/2}$-array pairs}
An interesting fact about the 
Sidelnikov-Lempel-Cohn-Eastman $\{0,1\}$ $\mathbb{Z}_{q-1}$-array and the 
Yamada Yamada-Pott $\{0,1\}$ $\mathbb{Z}_{(q-1)/2}$-array pair is that each pair can be obtained from the other.
%
\begin{theorem}\label{thm:ABCD}
For $q \geq 7$ and $q \equiv 3\; (\text{mod} \;4)$ let  $(A_1, B_1)$ and $(A_2\cup B_2)$ be the pair of sets of $1$
 indices 
of the Yamada Yamada-Pott $\{0,1\}$ 
$\mathbb{Z}_{(q-1)/2}$-array pair and the set of $1$ indices of the Sidelnikov-Lempel-Cohn-Eastman $\{0,1\}$ $\mathbb{Z}_{q-1}$-array, where
\begin{equation*}
	\begin{array}{l}
		A_1 = \left\lbrace \log_\alpha x  \; (\hspace{-.43cm}\mod\; \frac{q-1}{2})\,\,|\,\, x \in (C^2_0+1) \cap C^2_0 \right\rbrace,  \\
		B_1 = \left\lbrace \log_\alpha x   \; (\hspace{-.43cm}\mod\; \frac{q-1}{2})\,\,|\,\, x \in (C^2_0-1) \cap C^2_0 \right\rbrace,  \\
		A_2 = \left\lbrace \log_\alpha x  \; (\hspace{-.43cm}\mod\; \frac{q-1}{2})\,\,|\,\, x \in (C^2_1-1) \cap C^2_0 \right\rbrace,  \\
		B_2 = \left\lbrace \log_\alpha x   \; (\hspace{-.43cm}\mod\; \frac{q-1}{2})\,\,|\,\, x \in (C^2_1-1) \cap C^2_1 \right\rbrace.  \\
	\end{array}
\end{equation*}
Then, $A_1=B_2$, and $B_1=\mathbb{Z}_{\frac{q-1}{2}}\setminus A_2$.
\end{theorem}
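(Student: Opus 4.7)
The plan is to exploit the congruence $q \equiv 3 \pmod 4$, which gives that $(q-1)/2$ is odd and that $-1 = \alpha^{(q-1)/2}$ sits in $C_1^2$ (since $(q-1)/2$ is an odd exponent). Two consequences drive the whole argument: first, $-C_0^2 = C_1^2$ and $-C_1^2 = C_0^2$; second, $\log_\alpha(-x) = \log_\alpha(x) + (q-1)/2$ as an element of $\mathbb{Z}_{q-1}$, which reduces to $\log_\alpha(x)$ modulo $(q-1)/2$.

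For $A_1 = B_2$, I would show that the map $x \mapsto -x$ is a bijection from $(C_0^2 + 1) \cap C_0^2$ onto $(C_1^2 - 1) \cap C_1^2$. Indeed, if $x \in C_0^2$ and $x - 1 \in C_0^2$, then $-x \in C_1^2$ and $(-x)+1 = -(x-1) \in C_1^2$, so $-x \in (C_1^2 - 1) \cap C_1^2$; the argument reverses identically. Because $\log_\alpha(-x) \equiv \log_\alpha(x) \pmod{(q-1)/2}$, the two sets $A_1$ and $B_2$ obtained after reduction modulo $(q-1)/2$ are literally equal.

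For $B_1 = \mathbb{Z}_{(q-1)/2} \setminus A_2$, the strategy is to interpret both sides as images of a partition of $C_0^2$ under a bijection. The map $\varphi \colon C_0^2 \to \mathbb{Z}_{(q-1)/2}$ defined by $\varphi(\alpha^{2k}) = 2k \bmod (q-1)/2$ is a bijection, since $\{0,2,\ldots,q-3\}$ covers $\mathbb{Z}_{(q-1)/2}$ exactly once as $\gcd(2,(q-1)/2) = 1$ (here again we use that $(q-1)/2$ is odd). By definition, $B_1 = \varphi(\{x \in C_0^2 : x+1 \in C_0^2\})$ and $A_2 = \varphi(\{x \in C_0^2 : x+1 \in C_1^2\})$. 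These two preimage sets are disjoint, and because $-1 \in C_1^2$ no $x \in C_0^2$ has $x+1 = 0$, so $x+1$ must lie in $C_0^2 \sqcup C_1^2 = \mathbb{F}_q^*$, placing $x$ in exactly one of the two preimages. Hence the preimages partition $C_0^2$, and $\varphi$ transports this partition to $B_1 \sqcup A_2 = \mathbb{Z}_{(q-1)/2}$.

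No step here is hard in a structural sense; the only subtlety, and the place where the hypothesis $q \equiv 3 \pmod 4$ is genuinely used twice, is in simultaneously ensuring that (i) $-1 \in C_1^2$ so the negation bijection swaps the classes and no $x \in C_0^2$ satisfies $x+1 = 0$, and (ii) $(q-1)/2$ is coprime to $2$ so that reduction of even exponents modulo $(q-1)/2$ is one-to-one on $C_0^2$. I would state and verify these two facts at the start and then the equalities $A_1 = B_2$ and $B_1 \sqcup A_2 = \mathbb{Z}_{(q-1)/2}$ follow directly.
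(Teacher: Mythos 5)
Your proposal is correct and follows essentially the same route as the paper: the identity $A_1=B_2$ is obtained by multiplying the defining set by $-1=\alpha^{(q-1)/2}$ (which swaps $C_0^2$ and $C_1^2$ and leaves the discrete log unchanged modulo $(q-1)/2$), and $B_1\sqcup A_2=\mathbb{Z}_{(q-1)/2}$ comes from partitioning $C_0^2$ according to whether $x+1$ lies in $C_0^2$ or $C_1^2$ (possible since $-1\notin C_0^2$) and using that reduction of even exponents modulo the odd number $(q-1)/2$ is bijective. The paper phrases the second step via the partition $\mathbb{F}_q=\{-1\}\cup(C_0^2-1)\cup(C_1^2-1)$, but this is the same argument as your bijection $\varphi$.
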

\begin{proof}
Observe that  $$\alpha^{\frac{q-1}{2}}[(C^2_0 + 1) \cap  C^2_0]  = (\alpha^{\frac{q-1}{2}} C^2_0 + \alpha^{\frac{q-1}{2}}) \cap \alpha^{\frac{q-1}{2}} C^2_0.$$ Then, $q \equiv 3 \;(\text{mod} \; 4)$ implies $\alpha^{(q-1)/2}=-1 \notin C^2_0$ giving $$\alpha^{\frac{q-1}{2}} [(C^2_0 + 1) \cap  C^2_0] = (C^2_1 - 1 ) \cap C^2_1.$$ After taking the discrete logarithm and reducing modulo $(q-1)/2$, we get $A_1=B_2$.
Since $\FF_q = \{0\} \cup C^2_0 \cup C^2_1$ is a partitioning of $\FF_q$ 
and $\phi(x)=x-1$ is a one-to-one function from $\FF_q$ to $\FF_q$ we 
get 
\begin{equation}\label{eqn:partition}
\FF_q = \{-1\} \cup (C^2_0-1) \cup (C^2_1-1)
\end{equation}
as another partitioning of  $\FF_q$. Now, by equation~(\ref{eqn:partition}) and the fact that $-1 \notin C^2_0$, we get  
$$C^2_0=C^2_0\cap [(C^2_0-1) \cup (C^2_1-1)]=(C^2_0\cap (C^2_0-1)) \cup
(C^2_0\cap (C^2_1-1))$$ as a partitioning of $C^2_0$.
Then,  we get the set equations 
\begin{eqnarray*}
& &2 \mathbb{Z}_{\frac{q-1}{2}}=\log_\alpha(C^2_0) = \log_\alpha[(C^2_0 \cap (C^2_0-1)) \cup (C^2_0 \cap (C^2_1-1))]=\\
& & \log_\alpha[(C^2_0 \cap (C^2_0-1))] \cup\log_\alpha [(C^2_0 \cap (C^2_1-1))]= 2[B_1 \cup A_2]
\end{eqnarray*}
 as $$(C^2_0 \cap (C^2_0-1)) \cap (C^2_0 \cap (C^2_1-1))=\emptyset.$$
Hence,  
 $$ \mathbb{Z}_{\frac{q-1}{2}}=\frac{1}{2}\log_{\alpha}(C^2_0)=B_1 \cup A_2.$$
  It is also clear that $A_2 \cap B_1=\emptyset$. Thus, $$B_1 = \mathbb{Z}_{\frac{q-1}{2}} \setminus A_2 \quad \text{and} \quad A_2 = \mathbb{Z}_{\frac{q-1}{2}} \setminus B_1.$$ 
\end{proof}
Next, we  locate an
almost balanced perfect $\{0,1\}$ $\mathbb{Z}_{q-1}$-array pair based on a family of $\{0,1\}$ $\mathbb{Z}_{(q-1)/2}$-array pairs. 
 In fact, this result is presented partially in~\cite{Ding2008}, as Theorem 4.1. By Lemma~\ref{isomorphism} it suffices to locate an  almost 
balanced perfect $\{0,1\}$ $\Theta(\mathbb{Z}_{q-1})$-array.

\begin{theorem}\label{thm:backup}
Let $q \geq 7$ and $q=p^m\equiv 3 \;(\text{mod}\; 4)$, $\alpha$  be a generator of $\FF_q^*$.   
Let 
\begin{equation*}
	\begin{array}{l}
		A = \left\lbrace \log_\alpha x  \; (\hspace{-.43cm}\mod\; \frac{q-1}{2})\,\,|\,\, x \in (C^2_0-1) \cap C^2_0 \right\rbrace,  \\
		B = \left\lbrace \log_\alpha x   \; (\hspace{-.43cm}\mod\; \frac{q-1}{2})\,\,|\,\, x \in (C^2_0-1) \cap C^2_1 \right\rbrace \\
	\end{array}
\end{equation*}
 be the pair of sets of $1$ indices
 of the  $\{0,1\}$ 
$\mathbb{Z}_{(q-1)/2}$-array $(\aaa,\bb)$ pair.
Then $(\aaa,\bb)$ is a Yamada-Pott $\{0,1\}$ $\mathbb{Z}_{(q-1)/2}$-array pair. Let $\langle \omega\rangle=\Theta(\mathbb{Z}_2)$.   Then $\Theta(A)+\Theta(B)\omega \in \mathbb{Z}
[\Theta(\mathbb{Z}_2\times\mathbb{Z}_{(q-1)/2})]$ 
 corresponds to an almost 
balanced perfect 
$\{0,1\}$ $\Theta(\mathbb{Z}_2\times\mathbb{Z}_{(q-1)/2})$-array.
\end{theorem}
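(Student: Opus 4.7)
My plan is to reduce Theorem~\ref{thm:backup} to known facts about the Yamada pair $(M,N)$ together with Theorem~\ref{ThmYPPairToADS}. The key observation is that $A$ and $B$ are obtained from the Yamada pair via an action of $\mathbb{Z}_{(q-1)/2}^*$ that preserves both the Legendre and the Yamada-Pott properties.

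First I would prove the identities $A=2M$ and $B=-2N$ inside $\mathbb{Z}_{(q-1)/2}$. For $A$ this is nearly immediate: the bijection $a\mapsto \alpha^{2a}$ identifies $M$ with $(C_0^2-1)\cap C_0^2$, and $\log_\alpha(\alpha^{2a})\equiv 2a\pmod{(q-1)/2}$. For $B$ I would use the substitution $y=-x$; since $q\equiv 3\pmod 4$ forces $-1\in C_1^2$, this bijection sends $(C_0^2-1)\cap C_1^2$ onto $\{y\in C_0^2\setminus\{1\}:y-1\in C_1^2\}$, which under $y=\alpha^{2a}$ becomes the subset $\mathbb{Z}_{(q-1)/2}\setminus(\{0\}\cup N)$ of $\mathbb{Z}_{(q-1)/2}$ (noting that $0\notin N$ since $\alpha^0-1=0\notin C_0^2$). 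Skew-symmetry of $N$ then collapses this complement to $-N$, and $\log_\alpha(-y)\equiv \log_\alpha y\pmod{(q-1)/2}$ yields $B=-2N$.

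Because $q\equiv 3\pmod 4$, the integer $(q-1)/2$ is odd, so both $2$ and $-2$ belong to $\mathbb{Z}_{(q-1)/2}^*$; hence $(A,B)=(2M,-2N)$ is the image of the Yamada pair under $\tau=(0,2)$ and $\beta=(0,-2)$ in $\{0\}\rtimes\mathbb{Z}_{(q-1)/2}^*$, for which $\tau^*=-\beta^*$. This is an equivalence of Legendre pairs in the sense defined earlier in the paper, so $(\aaa,\bb)$ is itself a Legendre pair. Moreover, Lemma~\ref{lem:symskew} guarantees that $(0,2)$ and $(0,-2)$ preserve symmetry and skew-symmetry respectively; since Yamada's $M$ is symmetric and $N$ is skew-symmetric, $A$ is symmetric and $B$ is skew-symmetric, and $|A|=|M|=(q-3)/4=|N|=|B|$. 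By Definition~\ref{yamadapott}, this establishes the Yamada-Pott property.

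For the second assertion, I would transport $(\aaa,\bb)$ through the isomorphism $\Theta$ using Lemma~\ref{isomorphism} to obtain a Yamada-Pott pair in the multiplicatively written group $H=\Theta(\mathbb{Z}_{(q-1)/2})$, and then directly invoke Theorem~\ref{ThmYPPairToADS} with $\langle\omega\rangle=\Theta(\mathbb{Z}_2)$ to conclude that $\Theta(A)+\Theta(B)\omega$ corresponds to an almost balanced perfect $\{0,1\}$ $\Theta(\mathbb{Z}_2\times\mathbb{Z}_{(q-1)/2})$-array. The main obstacle is the bookkeeping in the first step, specifically using skew-symmetry of $N$ to identify $\mathbb{Z}_{(q-1)/2}\setminus(\{0\}\cup N)$ with $-N$ and translating correctly between the $\log_\alpha$-mod-$(q-1)/2$ parametrization used in the statement and the $a\mapsto \alpha^{2a}$ parametrization inherent to the Yamada construction; once $A=2M$ and $B=-2N$ are in hand, everything else follows from the cited lemmas and theorems.
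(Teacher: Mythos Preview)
Your approach is correct and cleaner than the paper's, but it is genuinely different. You reduce everything to the already-surveyed Yamada pair $(M,N)$ by establishing $A=2M$ and $B=-2N$ in $\mathbb{Z}_{(q-1)/2}$, then invoke the paper's equivalence machinery (the Legendre-pair equivalence under $\tau^*=\pm\beta^*$ together with Lemma~\ref{lem:symskew}) to transfer the Legendre, symmetry, and skew-symmetry properties wholesale. The paper instead obtains the Legendre property by citing Ding's Theorem 4.1 for an auxiliary pair $(A',B')$ equivalent to $(A,B)$, and then proves the symmetry of $A$ and the skew-symmetry of $B$ from scratch by direct element-chasing on $\alpha^{2i}-1=\alpha^{2j}$ and $\alpha^{2i}-1=\alpha^{2j+1}$, finishing the cardinality count $|B|=(q-3)/4$ via cyclotomic numbers. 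Your route avoids that computation entirely, since $|A|=|M|$ and $|B|=|N|$ are automatic once $A=2M$ and $B=-2N$ are in hand. The trade-off is that your argument leans on the stated (but not proved in the paper) symmetry/skew-symmetry of the Yamada $(M,N)$, whereas the paper's argument is self-contained at that point. For the second assertion, both proofs ultimately rely on Theorem~\ref{ThmYPPairToADS}; you make this invocation explicit through Lemma~\ref{isomorphism}, which is exactly right.
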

\begin{proof}
Let $A'= \sum_{g \in A/2}g$ and $B'= \sum_{g \in (B-1)/2}g$, where $A',B'\in \mathbb{Z}[\mathbb{Z}_{(q-1)/2}]$ and $(\aaa',\bb')$ be the corresponding  $\mathbb{Z}_{(q-1)/2}$-array pair.
 In Theorem 4.1 of~\cite{Ding2008} it is shown that 
$$A'(A')^{(-1)}+B'(B')^{(-1)}=\frac{q-7}{4}(\mathbb{Z}_{\frac{q-1}{2}}-0)+\frac{q-3}{2}(0).$$
Now, since $(\aaa',\bb')$ and $(\aaa,\bb)$ are equivalent
$\mathbb{Z}_{(q-1)/2}$-array pairs, we get that $(\aaa,\bb)$ is a Legendre $\{0,1\}$ $\mathbb{Z}_{(q-1)/2}$-array pair. 
Next, we show that $\aaa$ is a symmetric $\mathbb{Z}_{(q-1)/2}$-array. 
For a set $A\subseteq \mathbb{Z}_{(q-1)/2}$ let $-A=\{x\,|\, -x\in A\}$. 
For any $x \in (C^2_0-1) \cap C^2_0$ we have 
\begin{equation}\label{eqn:2j}
x=\alpha^{2i}-1=\alpha^{2j},
\end{equation}
 for some $i,j \in \mathbb{Z}$.  Multiplying both sides of equation~(\ref{eqn:2j}) by $x^{-1}=\alpha^{-2j}$ yields 
$$\alpha^{2i-2j}-\alpha^{-2j}=1$$ or $$\alpha^{-2j}=\alpha^{2i-2j}-1.$$ Then, $x^{-1} \in (C^2_0-1) \cap C^2_0 = A$ implying $-2j \in A$.
Hence, if $2j\in A$ then $2j \in -A$. 
Since $|(-A)| = |A|$ we get $A=-A$.
Finally, we show that $\bb$, equivalently $B$ is skew-symmetric. By the definition of $B$, any $x \in (C^2_0-1) \cap C^2_1$ satisfies 
\begin{equation}\label{eqn:2jplus1}
x = \alpha^{2i}-1 = \alpha^{2j+1},
\end{equation}
 for some $i,j \in \mathbb{Z}$.
 By multiplying both sides of equation~(\ref{eqn:2jplus1}) with $x^{-1}= \alpha^{-(2j+1)}$ we see that $$\alpha^{2i-2j-1}-\alpha^{-(2j+1)} = 1.$$ By rearranging terms we get $$\alpha^{-(2j+1)} = \alpha^{2i-2j-1} - 1,$$ and so $x^{-1} \in (C^2_1-1) \cap C^2_1$. Hence, $x^{-1} \notin C^2_0-1$, and  $-2j-1 \notin B$. Thus, if $b=2j+1 \in B$, then $-b \notin B$ giving that 
 $B \cap  (-B) = \emptyset.$ Now, $q \equiv 3$ (mod $4$) implies $\alpha^{(q-1)/2}=-1\notin C_0^2$. Then, 
 $$\alpha^{\frac{q-1}{2}}[(C_0^2-1)\cap C_1^2]=(C_1^2+1)\cap C_0^2$$ 
 and consequently $|(C_0^2-1)\cap C_1^2|=|(C_1^2+1)\cap C_0^2|$. 
  Now, by equation~(\ref{eqn:partition}) and the fact that
   $-1 \in C^2_1$, we get  
$$C^2_1=C^2_1\cap [(C^2_0-1) \cup (C^2_1-1)]=\{-1\}\cup(C^2_1\cap (C^2_0-1)) \cup
(C^2_1\cap (C^2_1-1))$$ as a partitioning of $C^2_1$.
Then,   the set equations 
\begin{eqnarray*}
& &2 \mathbb{Z}_{\frac{q-1}{2}}+1=\log_\alpha(C^2_1) = \log_\alpha[-1]\cup \log_\alpha[(C^2_1 \cap (C^2_0-1)) \cup (C^2_1 \cap (C^2_1-1))]=\\
& &\log_\alpha[-1]\cup \log_\alpha[(C^2_1 \cap (C^2_0-1))] \cup\log_\alpha [(C^2_1 \cap (C^2_1-1))]
\end{eqnarray*}
gives a partitioning of $2\mathbb{Z}_{(q-1)/2}+1$
 as $$(C^2_1 \cap (C^2_0-1)) \cap (C^2_1 \cap (C^2_1-1))=\emptyset,$$
 $-1 \notin (C^2_1 \cap (C^2_0-1))$ and $-1 \notin  (C^2_1 \cap (C^2_1-1))$.
 Since $\gcd ((q-1)/2,2)=1$, $\phi(x)=2x+1$ is an automorphism of 
 $\mathbb{Z}_{(q-1)/2}$.
 Then 
 {\footnotesize
 $$\left(2\mathbb{Z}_{\frac{q-1}{2}}+1\right) (\hspace{-.3cm}\mod\; \frac{q-1}{2})= 
 \left(\log_\alpha[-1]\cup \log_\alpha[(C^2_1 \cap (C^2_0-1))] \cup\log_\alpha [(C^2_1 \cap (C^2_1-1))]\right)(\hspace{-.3cm}\mod\; \frac{q-1}{2})$$}
\hspace{-.2cm} is a partitioning of $\mathbb{Z}_{(q-1)/2}$. This implies that 
 $|B|=|(C_0^2-1)\cap C_1^2|$.
   By part b of Lemma 6 in~\cite[p. 30]{StorerBook},  
  $|(C_0^2-1)\cap C_1^2|=|(C_1^2+1)\cap C_0^2|=(q-3)/4$. 
  Hence, $|B|=(q-3)/4$.
 We also have $|B|=|(-B)|$ and $B\cap(-B)=\emptyset$, 
 so $B\cup(-B)=\mathbb{Z}_{(q-1)/2}\backslash 0.$  
 Now, the result follows from Lemma~\ref{lem:symskewG}.
\end{proof}
\indent While it is believed that a Legendre $\{0,1\}$ $\mathbb{Z}_n$-array pair exists for all odd $n$, the existence of Yamada-Pott $\{0,1\}$ $\mathbb{Z}_n$-array pairs or $\{0,1\}$ $\mathbb{Z}_n$-array pairs $(\aaa,\bb)$ such that both $\aaa$ and $\bb$ are symmetric or skew-symmetric has not received as much attention. 
Table~\ref{tab:Table_YP_Pair_Cases} shows the existence and 
non-existence of $\{0,1\}$ Yamada-Pott 
$\mathbb{Z}_n$-array pairs. The comment column describes either how the pair is generated 
or how we have shown  nonexistence. ``Computer search'' means the existence or non-existence of a Yamada-Pott $\{0,1\}$ $\mathbb{Z}_n$-array was proven by an exhaustive computer search.  Under the ``Exist?'' column a ``Y'' or ``N'' means yes or no.
Our computer search was based on going through all possible pairs of $\{0,1\}$ sequences, $\aaa,\bb$ such that $$\sum_{i=1}^n a_i=
\sum_{i=1}^n b_i=\frac{n+1}{2}$$ and screening out the pairs that formed a Legendre pair. At the end of the search, for each found $\{0,1\}$ Legendre $\mathbb{Z}_n$-array pair $(\aaa,\bb)$, we checked for the  symmetry and skew symmetry of $\aaa$ and $\bb$ respectively.
\begin{table}[ht!]
	\begin{center}
		\caption{The existence of Yamada-Pott $\{0,1\}$ $\mathbb{Z}_n$-array pairs
		}\label{tab:Table_YP_Pair_Cases} 
		\begin{tabular}{|c|c|l}
		\hline 
		n & Exist? & Comment \\ 
		\hline 
		3 & Y & Theorem~\ref{thm:backup} with $q=2(3)+1=7$ \\ 
		\hline 
		5 & Y & Theorem~\ref{thm:backup} with $q=2(5)+1=11$  \\ 
		\hline 
		7 & N & Computer search \\ 
		\hline 
		9 & Y & Theorem~\ref{thm:backup} with $q=2(9)+1=19$ \\
		\hline 
		11 & Y & Theorem~\ref{thm:backup} with $q=2(11)+1=23$ \\
		\hline 
		13 & Y & Theorem~\ref{thm:backup} with $q=2(13)+1=27$ \\
		\hline 
		15 & Y & Theorem~\ref{thm:backup} with $q=2(15)+1=31$ \\
		\hline 
		17 & N & Computer search \\
		\hline 
		19 & N & Computer search  \\
		\hline 
		21 & Y & Theorem~\ref{thm:backup} with $q=2(21)+1=43$ \\
		\hline 
		23 & Y & Theorem~\ref{thm:backup} with $q=2(23)+1=47$ \\	
		\hline 
		25 & N & Computer search \\	
		\hline 
		27 & N & Computer search \\	
		\hline 
		29 & Y & Theorem~\ref{thm:backup} with $q=2(29)+1=59$ \\
		\hline 
		31 & N & Computer search \\
		\hline 
		\end{tabular} 
	\end{center}
\end{table}

Table~\ref{tab:atleastexist} shows the existence of a 
Legendre $\{0,1\}$ $\mathbb{Z}_n$-array pair for all possible combinations of $\aaa$ and $\bb$ being symmetric, skew-symmetric and neither symmetric nor skew-symmetric. 
The number at the top of each column is $n$.  The first two columns describe the attributes of $\aaa$ and $\bb$ respectively. In the first two columns ``N" means neither symmetric nor skew-symmetric, ``Sk" means skew-symmetric and  ``S" means symmetric.
   For each cell that is in a column with an integer at the top, ``E'' and  ``NE'' mean exists and does not exist respectively. 

\begin{table}[h]
	\begin{center}
	\caption{The existence of a 
Legendre $\{0,1\}$ $\mathbb{Z}_n$-array pair for all possible combinations of $\aaa$ and $\bb$ being symmetric, skew-symmetric and neither symmetric nor skew-symmetric} \label{tab:atleastexist}
		\begin{tabular}{cc|ccccccccc} 
	\multicolumn{2}{c}{Type}& \multicolumn{9}{c}{n}  \\ 
		\hline                 
       A & B  &  5& 7& 9& 11& 13&15 &17 &19 &21 \\		
		\hline 
		N&N & E  & E& E & E&E &E &E & E&E \\ 
		\hline 
		N&S & E  & NE& E & E&E &E &E & E&E \\ 
		\hline 
		N&Sk & E  & E& E & E&E &E &NE & E&E \\ 
		\hline 
	 S&S & E  & NE& NE & NE&E &NE &E & NE&NE \\
		\hline 
		S&Sk & E  & NE& E & E&E &E &NE & NE&E \\
		\hline 
		Sk&Sk & E  & E& NE & E& E &NE &NE & E&NE \\
		\hline 
		\end{tabular} 	
	\end{center}
\end{table}

Exhaustive searches proved that no balanced, perfect  $\{0,1\}$ $\mathbb{Z}_{54}$-array exists, on two different supercomputers, with different programs~\cite{IliasEmail}. This is consistent with our computer searches as finding a Yamada-Pott $\{0,1\}$ $\mathbb{Z}_{27}$-array  pair would imply a perfect balanced $\{0,1\}$ $\mathbb{Z}_{54}$-array by Theorem~\ref{ThmYPPairToADS}.

We end this section with a couple of comments.
\begin{enumerate}
	\item In~\cite{PottBook}, on page $130$, it is claimed  that a Yamada-Pott $\{0,1\}$ $\mathbb{Z}_{37}$-array pair exists. This is false as it originated from the mistake in part 2 of Theorem 3.1 in~\cite{ArasuDicyclic}. 
	The case  $n=37$ with $q=2*37+1=75$ is not a prime power. 
	However, a symmetric Paley $\{0,1\}$ $\mathbb{Z}_{37}$-array pair and a skew-symmetric Szekeres $\{0,1\}$ $\mathbb{Z}_{37}$-array pair exist.
	 The first example  of a 
	 Yamada-Pott $\{0,1\}$ $\mathbb{Z}_{q}$-array pair that arises from Theorem \ref{ThmDHM_t1_SzekeresPair} is at 
	 $q=15^2+4=229$, where $n=2q+1=459=27*17$ is not a prime 
	 power.	 
\item Exhaustive searches proved that no Legendre $\{0,1\}$
$\mathbb{Z}_{7}$-array pair $(\aaa,\bb)$, where $\aaa$ is symmetric, and  no Legendre 
$\{0,1\}$ $\mathbb{Z}_{17}$-array pair $(\aaa,\bb)$, where $\aaa$ is skew-symmetric
exists. Exhaustive searches  found a Legendre $\{0,1\}$ $\mathbb{Z}_{n}$-array pair $(\aaa,\bb)$, where at least one of $\aaa$ and $\bb$ is symmetric or skew-symmetric for each $n\leq 21$, see Table~\ref{tab:atleastexist}.
\end{enumerate}

\subsection{An inequivalent Legendre $\{-1,1\}$ $\mathbb{Z}_{57}$-array pair}
By using  a heuristic computer search the only known example of a Legendre $\{-1,1\}$ $\mathbb{Z}_{57}$-array pair was found in~\cite{KotsireasLP57}.   This  resulted in the construction of a $116 \times 116$ Hadamard matrix
 via Theorem~\ref{thm:bordered}. 
The Legendre $\{-1,1\}$ $\mathbb{Z}_{57}$-array pair  found is given by \\ {\scriptsize $$\aaa_1=(- + + - + - + - + + - + + - - - - + - + + + + + - + + + - - + + + - + - - - - - + - + - - + - + + + - - + - + - -)^{\top},$$}  {\scriptsize $$\bb_1=(- - + + - - + + - + + + + - + + - + + + + + - - - - - + - + + + - + + - - + - + - - - + + + - - - - + + - - - - +)^{\top},$$}where $-$, $+$ are used for $-1$, $1$, and commas are deleted to save space. 
 \\ This pair can be shown to satisfy the condition given by Definition \ref{DefLegendrePair}.  The distributions of the autocorrelations of $A$ and $B$ are $$(-11)^2(-7)^{12}(-3)^{10}(1)^{20}(5)^{12},$$ and $$(-7)^{12}(-3)^{20}(1)^{10}(5)^{12}(9)^2.$$ 
 
 By using cyclotomy we found a Legendre 
$\{-1,1\}$ $\mathbb{Z}_{3}\times\mathbb{Z}_{19}$-array pair $(\x, \y)$ that can be used to construct a  Legendre $\{-1,1\}$ $\mathbb{Z}_{57}$-array pair that is not equivalent to  the previously known $\{-1,1\}$ Legendre $\mathbb{Z}_{57}$-array pair. This construction is displayed in the next example. 
 \begin{example}\label{ex57}
 Construct $C_i^{(6,19)}$ for $i=0,1,\ldots,5$ for $\alpha=2$.
For this example, we explicitly construct these cosets for $d=6$, $q=19$ and $\alpha=2$. 
The elements are given by $C_0^{(6,19,2)}=\{1,7,11\}$ with the remaining cosets being generated  by multiplying  $C_0^{(6,19,2)}$ by $\alpha=2$ and reducing modulo $19$.
For brevity, we use $C_i^{6}$ for $C_i^{(6,19,2)}$.  Let 
$$X=\left\{\{0\}\times\{0, C^{6}_0, C^{6}_1, C^{6}_2\}\right\} \cup \left\{\{1\} \times \{C^{6}_0, C^{6}_2, C^{6}_3, C^{6}_4\}\right\} \cup \left\{\{2\}\times\{C^{6}_3, C^{6}_4\}\right\},$$
and
$$Y=\left\{\{0\}\times\{0, C^{6}_0, C^{6}_4, C^{6}_5\}\right\} \cup \left\{\{1\} \times \{C^{6}_0, C^{6}_3, C^{6}_5\}\right\} \cup \left\{\{2\}\times\{C^{6}_0, C^{6}_1, C^{6}_3\}\right\}.$$ 
Then, the Legendre $\{-1,1\}$ $\mathbb{Z}_{3}\times \mathbb{Z}_{19}$-array pair $(\x,\y)$ obtained by letting 
\begin{equation*} \label{examplex}
	x_i= \begin{cases}
		\phantom{-} 1 \,\, \, \, \, \, \, \quad     $ if $i \in X,$ $\\
		-1 \,\,  \quad \quad \, $otherwise, $
	\end{cases}
\end{equation*}
and 
\begin{equation*} \label{exampley}
	y_i= \begin{cases}
		\phantom{-}1 \,\, \, \, \, \, \, \quad     $ if $i \in Y,$ $\\
		-1 \,\,  \quad \quad \, $otherwise $
	\end{cases}
\end{equation*}
 satisfies Definition~\ref{DefLegendrePair}. 
The  distribution of autocorrelations  for $\x$ and $\y$ are $$(-7)^{14}(-3)^{12}(1)^{18}(5)^{12},$$ $$(-7)^{12}(-3)^{18}(1)^{12}(5)^{14}.$$  
 The correlation energy for the 
 $\{-1,1\}$ $\mathbb{Z}_{3}	\times \mathbb{Z}_{19}$  
 array pair $(\x, \y)$ is $1,112$.
To construct a  Legendre $\{-1,1\}$ $\mathbb{Z}_{57}$ array pair
observe that $\mathbb{Z}_{57} \cong \mathbb{Z}_{3}	\times \mathbb{Z}_{19}$ via the map $\phi(i)  = (i\, \, (\text{mod } 3),\, (i\, \, (\text{mod } 19))$. Then, the $\{-1,1\}$  $\mathbb{Z}_{57}$-array pair  $(\phi^{-1}((x_g)),\phi^{-1}((y_g)))$ is a  Legendre pair that has the same distribution of autocorrelations and the same correlation energy as those of $(\x,\y)$. The map $\phi^{-1}$ is constructed as follows. Let $\phi(i)=(i\, \, (\text{mod } 3),\, (i\, \, (\text{mod } 19))=(a,b)$. Then there exists $k_1,k_2 \in \mathbb{Z}^{\geq 0}$ such that $a+3k_1=b+19k_2=i$, $0\leq k_1\leq 19$, and $0\leq k_2 \leq 3$. Then, $3k_1+(a-b)=19k_2$ imply $k_2=(a-b)19^{-1}\, \, (\text{mod } 3)$ and $k_1=-(a-b)3^{-1}\, \, (\text{mod } 19)$. Now, $k_1,k_2$ are uniquely determined by the inequalities $0\leq k_1\leq 19$, and $0\leq k_2 \leq 3$. Hence, $\phi^{-1}(a,b)=a+3k_1=b+19k_2=i$. This gives us 
{\scriptsize 
$$\aaa_2=( +  + -  +  +  +  +  +  +  + - - - - - -  +  + - - -  + - - - + - - + -  +  + -  + -  + -  + -  + - -  +  + -  +  + - -  + + - - - -  +  + )^{\top},$$}
 {\scriptsize $$\bb_2=(+  +  + - - - -  +  + -  +  + -  +  +  + - - - -  + - - -  + -  + - - -  +  + - -  + -  +  + -  + - + - - - + + -  +  +  +  + - -  + -  +)^{\top}.$$}The correlation energy for the Legendre 
$\{-1,1\}$ $\mathbb{Z}_{57}$-array pair in~\cite{KotsireasLP57} is $1,240.$ 
Thus $(\aaa_2,\bb_2)$ is not equivalent to $(\aaa_1,\bb_1)$.
 \end{example}
We propose developing theoretical  and computational methods for finding Legendre $\{-1,1\}$ $\mathbb{Z}_n$-array pairs by using cyclotomic cosets as in Example~\ref{ex57}  as a future research direction. 
 




\section*{Acknowledgements}
Dr. K.~T.~Arasu was supported by the
U.S. Air Force Research Lab Summer Faculty Fellowship Program
sponsored by the Air Force Office of Scientific Research.
The views expressed in this article are
those of the authors and do not reflect the official policy or
position of the United States Air Force, Department of Defense, or
the US Government.  The authors thank two referees for improving the clarity of the paper substantially.       

\bibliographystyle{spbasic}
\bibliography{Unification_Bib}
\end{document}